\newtheorem{Theorem}{Theorem}[section]
\newtheorem{Lemma}{Lemma}[section]
\newtheorem{Proposition}{Proposition}[section]
\newtheorem{Remark}{Remark}[section]
\newtheorem{Definition}{Definition}[section]
\begin{document}

\def\eR{\mathbf{R}}
\def\Rd{{\eR}^d}
\def\Rdd{{\eR}^{d\times d}}
\def\Rdsym{{\eR}^{d\times d}_{sym}}
\def\eN{\mathbb{N}}
\def\eZ{\mathbf{Z}}
\def\IdM{\mathbb{I}_d}
\def\CrlS{\mathcal{S}}
\def\tder{\partial_t}
\def\SymG{\mathbb{D}}
\def\SymGDev{\mathbb{D}^{D}}
\def\dd{\mbox{d}}
\newcommand{\essinf}{\operatorname{ess\,inf}}
\newcommand{\esssup}{\operatorname{ess\,sup}}
\newcommand{\supp}{\operatorname{supp}}
\newcommand{\spn}{\operatorname{span}}
\newcommand\dx{\; \mathrm{d}x}
\newcommand\dy{\; \mathrm{d}y}
\newcommand\dz{\; \mathrm{d}z}
\newcommand\dt{\; \mathrm{d}t}
\newcommand\ds{\; \mathrm{d}s}
\newcommand\diff{\mathrm{d}}
\newcommand\dvr{\mathop{\mathrm{div}}\nolimits}
\newcommand\pat{\partial_t}
\newcommand\sym{\mathrm{sym}}
\newcommand\diam{\mathrm{diam}}
\newcommand\tr{\mathop{\mathrm{tr}}\nolimits}
\newcommand\lspan{\mathop{\mathrm{span}}\nolimits}

\title{Local-in-time existence of strong solutions to a class of the compressible non-Newtonian Navier-Stokes equations}
\author{Martin Kalousek\footnote{kalousek@math.cas.cz, Institute of Mathematics, Czech Academy of Sciences, \v Zitn\'a 25, 115 67 Praha 1, Czech Republic}, V\'aclav M\'acha\footnote{macha@math.cas.cz, Institute of Mathematics, Czech Academy of Sciences, \v Zitn\'a 25, 115 67 Praha 1, Czech Republic}, \v{S}\'arka Ne\v{c}asov\'a \footnote{matus@math.cas.cz, Institute of Mathematics, Czech Academy of Sciences, \v Zitn\'a 25, 115 67 Praha 1, Czech Republic}}
\maketitle
\begin{abstract} 
The aim of this article is to show a local-in-time existence of a strong solution to the generalized compressible Navier-Stokes equations for arbitrarily large initial data. The goal is reached by $L^p$-theory for linearized equations which are obtained with help of the Weis multiplier theorem and can be seen as a generalization of the work of Shibata and Enomoto \cite{EnSh} (devoted to compressible fluids) to compressible non-Newtonian fluids. 
\end{abstract}
\textbf{Keywords:} non-Newtonian fluids, the Weis theorem, $L^p$-theory\\
\textbf{AMS subject classification:} 35A01, 35B65, 35P09, 35Q35
\section{Introduction}
In this paper, we analyze a boundary value problem for the Navier-Stokes equations describing the flow of a compressible non-Newtonian fluid that reads
\begin{equation}\label{SystemClass}
	\begin{alignedat}{2}
	\tder(\varrho u)+\dvr(\varrho u\otimes u)+\nabla \pi(\varrho)=&\dvr\CrlS &&\text{ in }Q_T,\\
	\tder\varrho+\dvr(\varrho u)=&0 &&\text{ in }Q_T,\\
	u(0,\cdot)=u_0,\varrho(0,\cdot)=&\varrho_0&&\text{ in }\Omega. 
	\end{alignedat}
\end{equation}
Here $\Omega\subset\mathbb{R}^d,d\geq 2$ is the domain occupied by the fluid, $T>0$ is the time of evolution and $Q_T=(0,T)\times\Omega$. We denote by $u(t,x)$ the fluid velocity, by $\varrho(t,x)$ the fluid density and by $\CrlS$ the viscous stress. We assume the periodic boundary condition, i.e., $\Omega$ is a torus
\begin{equation*}
\Omega = \left([-1,1]|_{\{-1,1\}}\right)^d.
\end{equation*}

We restrict ourselves to the constitutive relation
\begin{equation}\label{SDef}
\CrlS=2\mu(|\SymGDev u|^2)\SymGDev u+\eta(\dvr u)\dvr u\IdM,
\end{equation}
where $\mu$ and $\eta$ are viscosity coefficients whose properties will be specified later.

\subsection{Discussion and main result}
The system with $\mu \equiv \mu_0\in \mathbb R$ and $\eta\equiv \eta_0\in \mathbb R$ has been extensively studied. The mathematical analysis of compressible viscous fluids goes back to 1950s. The first result concerning uniqueness was given by Graffi \cite{Graffi} and Serrin \cite{Serrin}. A local in time existence in H\" older continuous spaces was proven by Nash \cite {Nash}, Itaya \cite{Itaya1,Itaya2} and Vol'pert and Hudjaev \cite {Volpert}.  In Sobolev-Slobodetskii space the local existence was shown by Solonnikov \cite{Sol}. The local-in-time existence and global-in-time existence for small data in Hilbert space traces back to Valli \cite{Valli}. The optimal regularity of local in time solutions was obtained by Charve and Danchin \cite{ChaDa}. The global solution with the initial data close to a rest state was shown by Matsumura and Nishida \cite{MatNi1,MatNi2} in the whole space and the exterior domain. The case of bounded domain was treated in work of Valli and Zaj\k{a}czkowski \cite{VaZa}.
 
The global-in-time existence of a weak solution is much more recent and goes back to the half of 1990s. The existence is known due to nowadays standard Feireisl - Lions theory -- we refer to \cite{FeNoPe} and to \cite{Lions}. 

However, there is an unsatisfactory number of articles for the system with a general $\mu$ and $\eta$. In particular, Mamontov \cite{Mamontov1}, \cite{Mamontov2} considered an exponentially growing viscosity and isothermal pressure. For such case the global existence of a weak solution was shown. Recently, Abbatiello, Feireisl and Novotn\'y in \cite{AbFeNo} provide a proof of the existence of so-called dissipative solution -- a notion of solution which is 'weaker' than a weak solution. The existence of weak solution is still unproven. The main obstacle seems to be the lack of compactness of velocity $u$ which is caused by an insufficient control of $\pat u$ in (and near) vacuum regions. 

Let us mention that such type of problem was already studied in the incompressible case by Ladyzhenskaya \cite{Lad} and then extensively studied by group of J. Ne\v cas \cite{Bel1, Bel2, MNR}. Further, basic properties of weak or measure-valued solution of incompressible non-Newtonian fluids are described in \cite{MNRR,BeBl}. Concerning the compressible case the first attempt to solve this problem can be found in work of Ne\v casov\' a and Novotn\' y \cite{MNN} where the existence of measure-valued solution was shown. 

The concept of the maximal regularity is classical and the main achievements for abstract theory go back to  the  work of Ladyzhenskaya, Uraltzeva and Solonnikov \cite{LUS}, Da-Prato and Grisvard \cite{DG}, Amann \cite {Amann2} and Pr{\" u}ss \cite{P}.  The notion of  $\cal R$--sectoriality was introduced  by Clement and Pr\"uss see \cite{CP}. The fundamental result by Weis \cite{W} about equivalency   of description for the maximal regularity in terms of vector-valued Fourier multipliers and ${\cal R}$--sectoriality was a breakthrough for the applications. 

The Weis multiplier theorem allows to deduce the $L^p$ maximal regularity to problems connected with the fluid dynamics. We refer to \cite{DeHiPr} and \cite{KuWe} for the comprehensible decription of the method.
The maximal $L^p$ regularity result for incompressible fluids can be found in work by Shibata and Shimizu \cite{SHS}, for the  non-Newtonian incompressible situation it was established in work by Bothe and Pr\"uss \cite{BP}. Maximal regularity for the compressible case was shown by Enomoto and Shibata \cite{EnSh}.
 
The goal of this paper is to use the Weis multiplier theorem  to show the short-time existence of a strong solution in $L^p$ setting for the non-Newtonian compressible fluid.


Let us introduce our main result.
\begin{Theorem}\label{Thm:Main}
Let $\mu \in C^3([0,\infty))$ and $\eta \in C^2(\mathbb{R})$ satisfy $\mu(s) + 2\mu'(s) s >0$ for all $s\geq 0$ and $\eta(r) + \eta'(r)r >0$ for all $r\in\mathbb{R}$. Let, moreover, $\pi \in C^2([0,\infty))$, $q>d$ and $p\in \left(\frac{2q}{q-d},\infty\right)$ be given. Then for every $u_0\in W^{2,q}(\Omega)$ and $\varrho_0\in W^{1,q}(\Omega)$, $\frac 1{\varrho_0}\in L^\infty(\Omega)$ there is $T>0$ such that there exists
\begin{equation*}
(\varrho,u)\in L^p(0,T;W^{1,q}(\Omega))\times L^p(0,T;W^{2,q}(\Omega))
\end{equation*}
with
\begin{equation*}
(\pat \varrho,\pat u)\in L^p(0,T;W^{1,q}(\Omega))\times L^p(0,T;L^q(\Omega))  
\end{equation*}
which satisfies \eqref{SystemClass}.\label{thm.main}
\end{Theorem}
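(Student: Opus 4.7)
The plan is to combine $L^p$-maximal regularity for a suitable linearization with a Banach fixed-point argument on a short time interval, in the spirit of Enomoto-Shibata \cite{EnSh} for the Newtonian compressible case. First I would put \eqref{SystemClass} into non-conservative form: using the continuity equation in the momentum balance gives
\begin{equation*}
\rho\,\pat u - \dvr\CrlS(\nabla u) + \nabla\pi(\rho) = -\rho\,u\cdot\nabla u, \qquad \pat\rho + u\cdot\nabla\rho + \rho\,\dvr u = 0.
\end{equation*}
Expanding $\dvr\CrlS$ by the chain rule yields a second-order operator whose principal part reads
\begin{equation*}
\mathcal{L}(\nabla u_0)\,u = \partial_l\!\left(\mathcal{A}_{ijkl}(\nabla u_0)\,\partial_k u_j\right),
\end{equation*}
where $\mathcal{A}$ is the fourth-order tensor obtained by differentiating $2\mu(|\SymGDev u|^2)\SymGDev u + \lambda(\dvr u)\dvr u\,\IdM$ in $\nabla u$. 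The conditions $\mu(s)+2\mu'(s)s>0$ and $\lambda(r)+\lambda'(r)r>0$ are exactly the strong ellipticity of $\mathcal{A}$ on the deviatoric and trace components, so $\mathcal{L}(\nabla u_0)$ is strongly elliptic; all remaining terms will be treated as lower-order perturbations or a source.

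Second, I would set up the linearization around $(u_0,\rho_0)$: seek $(v,\sigma)$ with
\begin{equation*}
\rho_0\,\pat v - \mathcal{L}(\nabla u_0)\,v + \pi'(\rho_0)\nabla\sigma = F, \qquad \pat\sigma + u_0\cdot\nabla\sigma + \rho_0\,\dvr v = G,
\end{equation*}
where $(F,G)$ collects the nonlinear residual. Since $q>d$, the embedding $W^{1,q}(\Omega)\hookrightarrow C^0(\Omega)$ makes $\nabla u_0$, $\rho_0$, $\pi'(\rho_0)$, $\mu(|\SymGDev u_0|^2)$, $\lambda(\dvr u_0)$ continuous on the compact torus $\Omega$, and $1/\rho_0\in L^\infty(\Omega)$ provides a uniform positive lower bound for $\rho_0$. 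I would first obtain maximal $L^p$-regularity for the constant-coefficient model problem (coefficients frozen at a single point of $\Omega$) by computing the Fourier symbol $\mathbb{A}(\xi)$ of the full linear operator on $W^{1,q}\times L^q$ explicitly, verifying operator-valued Mikhlin-type bounds on the resolvent $(\lambda+\mathbb{A})^{-1}$, and invoking the Weis theorem \cite{W,DeHiPr,KuWe} to upgrade these bounds to $\mathcal{R}$-sectoriality. The variable-coefficient problem is then recovered by a partition of unity on $\Omega$ together with a Neumann-series perturbation argument, which is legitimate because the leading coefficients are continuous.

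Third, with linear maximal regularity in hand, the nonlinear problem is solved by contraction. I would work in
\begin{equation*}
\mathcal{X}_T = \Bigl\{(\sigma,v) : \sigma\in L^p(0,T;W^{1,q}),\ \pat\sigma\in L^p(0,T;W^{1,q}),\ v\in L^p(0,T;W^{2,q}),\ \pat v\in L^p(0,T;L^q)\Bigr\},
\end{equation*}
and define $\Phi(\sigma,v)=(\tilde\sigma,\tilde v)$ as the solution of the linearized system with $F,G$ built from $(\sigma,v)$. Using $q>d$ to control the Nemytskii operators $\mu(|\SymGDev v|^2)$, $\lambda(\dvr v)$, $\pi(\sigma)$ and the quadratic convective term, and exploiting the small factor $T^{1/p}$ gained from H\"older's inequality in time, I would show that $\Phi$ preserves a ball in $\mathcal{X}_T$ and is a contraction for $T$ small enough; the fixed point is the desired strong solution, and its density stays positive on $(0,T)$ by continuity.

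The main obstacle is the $\mathcal{R}$-sectoriality step. For the Newtonian system of \cite{EnSh} one has the transparent Lam\'e structure $\dvr\CrlS = \mu_0\Delta + (\mu_0+\lambda_0)\nabla\dvr$, whereas here the leading symbol
\begin{equation*}
\mathcal{A}_{ijkl}(\nabla u_0)\,\xi_k\xi_l
\end{equation*}
couples the deviatoric and spherical parts through $u_0$-dependent weights, so resolvent estimates have to be extracted from the abstract ellipticity guaranteed by the sign conditions on $\mu$ and $\lambda$, uniformly in the freezing point. Establishing these bounds together with their $\mathcal{R}$-boundedness—and then combining the parabolic block for $v$ with the transport block for $\sigma$ in the product space $W^{1,q}\times L^q$—is where the bulk of the technical work will lie.
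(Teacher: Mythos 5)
Your overall architecture (Fourier symbol estimates at a frozen coefficient point, $\mathcal R$-sectoriality via the Weis theorem, partition-of-unity perturbation, Banach fixed point with a gain from $T^{1/p}$) matches the paper's proof. But there is a genuine gap in your treatment of the continuity equation, and it is the reason the paper passes to Lagrangian coordinates before linearizing.

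You keep the Eulerian transport term and put $u_0\cdot\nabla\sigma$ in the \emph{linear} part of the $\sigma$-equation. In the frozen-coefficient resolvent problem this produces the factor $(\lambda + i\,u_0(x_0)\cdot\xi)^{-1}$ when you eliminate $\sigma$, and the symbol $\lambda + i\,u_0(x_0)\cdot\xi$ is not bounded away from zero on a sector $\Sigma_{\beta,\nu}$ with $\beta<\pi/2$: for large $|\xi|$ the imaginary contribution $i\,u_0\cdot\xi$ pushes $\lambda + i\,u_0\cdot\xi$ arbitrarily close to the boundary of the sector even when $|\lambda|$ is large, so there is no uniform bound of the form $|\lambda + i\,u_0\cdot\xi|\gtrsim |\lambda|$. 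The parabolic coupling through $\dvr v$ and $\nabla\sigma$ does not rescue this because the $\sigma$-equation has no diffusive term of its own. Consequently the $\mathcal R$-sectoriality estimate for $\lambda(\lambda-\mathscr A)^{-1}$ on $W^{1,q}\times L^q$ does not close.

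Nor can you simply move $u\cdot\nabla\rho$ into the source $G$: the linear theory requires $\mathscr G\in L^p(0,T;W^{1,q})$, hence you would need $u\cdot\nabla\rho\in L^p(0,T;W^{1,q})$, which costs two derivatives of $\rho$; but the solution class only gives $\rho\in W^{1,q}$ in space, so the estimate cannot close in the fixed-point iteration either.

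The paper's remedy is to transform \eqref{SystemClass} to Lagrangian coordinates \eqref{LCoord} \emph{before} linearizing. This removes every convective term; the linearized continuity equation becomes $\pat\theta + (\varrho_*+\theta_0)\dvr(\tilde u - u_0)=\mathscr G$ without any $u_0\cdot\nabla$, so one can solve for $\theta=\tfrac1\lambda\bigl(\mathscr G-(\varrho_*+\theta_0)\dvr u\bigr)$ in the resolvent problem and reduce to a single second-order elliptic problem for $u$ to which the symbol calculus, Lemma \ref{lem.3.1.ensh} and Theorem \ref{thm.3.3.ensh} apply cleanly. The price is that the Lagrangian change of variables generates extra lower-order source terms (the $E_u$-dependent corrections in $G$ and $F$), but those are genuinely subcritical and controlled by $T^{\alpha}$ factors in the fixed-point step. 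To fix your proposal you should incorporate this Lagrangian step; the rest of your strategy then goes through essentially as in the paper.
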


The proof of the main result relies on several steps. The first step is the use of Lagrange coordinates to get rid of the convective term.

Secondly, we linearize the resulting system 
and we show $L^p-L^q$ regularity property of a solution. This is discussed in Section \ref{Lpreg} and it is obtained by means of the Weis multiplier theorem.

The sufficient regularity of the velocity $u$ allows to deduce appropriate bounds 
and one may use the Banach fix-point argument to deduce the existence of a strong solution assuming the time $T>0$ is small enough. This is described in Section \ref{Bfp}. 

The remaining part of this introductory section is devoted to necessary preliminary statements.

\subsection{Notation}
We denote by $\IdM$ the $d\times d$ identity matrix. For the $d\times d$--matrix valued mapping $G$ $\sym G$ denotes the symmetric part of $G$, i.e., $\sym G=\frac{1}{2}\left(G+G^\top\right)$ and $G^{D}$ is the traceless part of $G$, i.e., $G^{D}=G-\frac{1}{d}\tr G\IdM$. Let $u$ be a mapping with values in $\mathbb{R}^d$ then $\SymG u=\sym(\nabla u)$ is the symmetric part of the gradient and $\SymGDev u$ stands for the traceless part of $\SymG u$.
For purposes of this paper we denote by $\mathcal{V}^{p,q}(Q_T)$ the following function space
\begin{equation*}
\mathcal{V}^{p,q}(Q_T)=L^p(0,T;W^{2,q}(\Omega))\cap W^{1,p}(0,T;L^q(\Omega))
\end{equation*}
with the norm $\|\cdot\|_{V^{p,q}(Q_T)}=\|\cdot\|_{L^p(0,T;W^{2,q}(\Omega))}+\|\cdot\|_{W^{1,p}(0,T;L^q(\Omega))}$.
Starting with definition \eqref{SDef}, using the fact that $\tr\SymGDev u=0$ and the symmetry of $\SymGDev u$ we have 
\begin{equation*}
\begin{split}
(\dvr \CrlS)_j=&2\mu(|\SymGDev u|^2)\sum_{l=1}^d\partial_l \left((\SymGDev u)_{jl} -\frac{1}{d}\delta_{jl}\dvr u\right)\\
&+4\mu'(|\SymGDev u|^2)\sum_{k,l,m=1}^d(\SymGDev u)_{jk}(\SymGDev u)_{lm}\partial_k\left((\SymG u)_{lm}-\frac{1}{d}\delta_{lm}\dvr u\right)\\
&+\sum_{l=1}^d\delta_{jl}\left(\eta(\dvr u)+\eta '(\dvr u)\dvr u\right)\partial_l\dvr u\\
=&\mu(|\SymGDev u|^2)\sum_{l=1}^d(\partial^2_l u_j+\partial_j\partial_l u_l)+2\mu'(|\SymGDev u|^2)\sum_{k,l,m=1}^d(\SymGDev u)_{jk}(\SymGDev u)_{lm}\left(\partial_k\partial_m u_l+\partial_k\partial_lu_m\right)\\
&-\frac{2}{d}\mu(|\SymGDev u|^2)\partial_j\dvr u+\left(\eta(\dvr u)+\eta'(\dvr u)\dvr u\right)\partial_j \dvr u\\
=&\mu(|\SymGDev u|^2)\sum_{l=1}^d(\partial^2_l u_j+\partial_j\partial_l u_l)+4\mu'(|\SymGDev u|^2)\sum_{k,l,m=1}^d(\SymGDev u)_{jl}(\SymGDev u)_{km}\partial_l\partial_m u_k\\
&-\frac{2}{d}\mu(|\SymGDev u|^2)\partial_j\dvr u+\left(\eta(\dvr u)+\eta'(\dvr u)\dvr u\right)\partial_j \dvr u.
\end{split}
\end{equation*}
Hence we deduce that
\begin{equation}\label{eq:DivS}
(\dvr \CrlS)_j=\sum_{k,l,m=1}^d a^{lm}_{jk}(\mathbb Du)\partial_m\partial_l u_k,
\end{equation}
where
\begin{equation}\label{eq:ACoef}
\begin{split}
a^{lm}_{jk}(\mathbb Du)=&\mu(|\SymGDev u|^2)\left(\delta_{jk}\delta_{lm}+\delta_{jm}\delta_{kl}\right)+4\mu'(|\SymGDev u|^2)(\SymGDev u)_{jl}(\SymGDev u)_{km}\\
&+\left(\eta(\dvr u)+\eta'(\dvr u)\dvr u-\frac{2}{d}\mu(|\SymGDev u|^2)\right)\delta_{km}\delta_{jl}.
\end{split}
\end{equation}
We note that $a_{jk}^{lm}$ possesses the following symmetries
\begin{equation}\label{eq:a.symmetry}
a_{jk}^{lm}=a_{kj}^{lm}=a^{jk}_{lm}=a^{jm}_{lk}=a^{lk}_{jm}\text{ for all }j,k,l,m=1,\dots,d.
\end{equation}
We define for a given $u\in C^1(\overline{\Omega})^d$
the quasilinear differential operator $\mathcal{A}$ as
\begin{equation}\label{eq:def.A}
\mathcal{A}(\mathbb D u)(\mathbb Dv)=\sum_{l,m=1}^da^{lm}_{jk}(\mathbb Du)\partial_l\partial_m v,
\end{equation}
whose ellipticity is ensured by certain identities satisfied by $\mu,\eta$ and their derivatives as we now show.
Considering $\xi\in\mathbb{R}^{d\times d}_{sym}$ we get 
\begin{equation*}
\begin{split}
a^{lm}_{jk}\xi_{jl}\xi_{km}=& \mu(|\SymGDev u|^2)(\xi_{jl}^2+\xi^2_{km})+4\mu'(|\SymGDev u|^2)(\SymGDev u)_{jl}\xi_{jl}(\SymGDev u)_{km}\xi_{km}\\
&+\left(\eta(\dvr u)+\eta'(\dvr u)\dvr u-\frac{2}{d}\mu(|\SymGDev u|^2)\right)\xi_{jj}\xi_{ll}.
\end{split}
\end{equation*}
Hence we obtain
\begin{equation*}
\begin{split}
\sum_{j,k,l,m=1}^da^{lm}_{jk}\xi_{jl}\xi_{km}=&2\mu(|\SymGDev u|^2)|\xi|^2+4\mu'(|\SymGDev u|^2)|\SymGDev u\cdot\xi|^2\\
&+\left(\eta(\dvr u)+\eta'(\dvr u)\dvr u-\frac{2}{d}\mu(|\SymGDev u|^2)\right)(\tr \xi)^2.
\end{split}
\end{equation*}
Applying the decomposition $\xi=\xi^D+\frac{tr\xi}{d}\IdM$, in particular the fact that $\xi^D\cdot\IdM=0$, we arrive at 
\begin{equation}\label{BilFormDet}
\sum_{j,k,l,m=1}^da^{lm}_{jk}\xi_{jl}\xi_{km}=2\mu(|\SymGDev u|^2)|\xi^D|^2+4\mu'(|\SymGDev u|^2)|\SymGDev u\cdot\xi^D|^2+\left(\eta(\dvr u)+\eta'(\dvr u)\dvr u\right)(\tr \xi)^2.
\end{equation}

We deal with the notion of strong ellipticity of the operator $\mathcal{A}(\mathbb Du)$ which means that there exists $C_{el}(\mathbb Du)>0$ such that 
\begin{equation}\label{StrongEll}
\sum_{j,k,l,m=1}^da^{lm}_{jk}\xi_{jl}\xi_{km}\geq C_{el}(\mathbb Du)|\xi|^2\text{ holds for all }\xi\in\mathbb{R}^{d\times d}.
\end{equation}

Let investigate necessary conditions on $\mu$ and $\eta$ related to the strong ellipticity of $\mathcal{A}(\mathbb Du)$.
We first choose $g\in\mathbb{R}^d$ as an eigenvector of $\SymGDev u$ and $h\in\mathbb{R}^d$ being perpendicular to $g$. We consider the matrix $(\xi)_{jk}=\frac{1}{2}(g_jh_k-g_kh_j)$, $j,k=1,\ldots d$ and we obtain from \eqref{BilFormDet} combined with \eqref{StrongEll} 
\begin{equation*}
2\mu(|\SymGDev u|^2)|\xi|^2\geq C_{el}(\mathbb Du)|\xi|^2
\end{equation*}
as $\xi$ is obviously traceless and $\xi^D=\xi$ consequently. Hence one concludes that 
\begin{equation*}
2\mu(s)\geq C_{el}(\mathbb Du)\text{ for all }s\in[0,\|\SymGDev u\|^2_{L^\infty(\Omega)}].
\end{equation*}
As $u\in C^1(\overline\Omega)^d$ can be chosen arbitrarily, we get
\begin{equation}\label{MuCond1}
\mu(s)> 0\text{ for any }s\geq 0.
\end{equation}
In order to get the next condition involving $\mu$, we set $\xi=\SymGDev u$ in \eqref{StrongEll}, which together with \eqref{BilFormDet} imply 
\begin{equation*}
2\mu(|\SymGDev u|^2)|\SymGDev u|^2+4\mu'(|\SymGDev u|^2)|\SymGDev u|^2|\SymGDev u|^2\geq C_{el}(u)|\SymGDev u|^2.
\end{equation*}
Hence one concludes that 
\begin{equation*}
2\mu(s)+4\mu'(s)s\geq C_{el}(\mathbb Du),\text{ for any }s\in [0,\|\SymGDev u\|^2_{L^\infty(\Omega)}]
\end{equation*}
and finally
\begin{equation}\label{MuCond2}
\mu(s)+2\mu'(s)s> 0\text{ for any }s\geq 0
\end{equation}
by repeating the arguments leading to \eqref{MuCond1}. To determine a condition on $\eta$ we choose $\xi=\IdM$ in \eqref{StrongEll}. It follows that $(\IdM)^D=0$ and we obtain from \eqref{BilFormDet} that
\begin{equation*}
\left(\eta(\dvr u)+\eta'(\dvr u)\dvr u\right)d^2\geq C_{el}(\mathbb Du)d.
\end{equation*}
Repeating the arguments leading to \eqref{MuCond1}, \eqref{MuCond2} respectively, we get
\begin{equation*}
\eta(r)+\eta'(r)r\geq \frac{C_{el}(u)}{d}\text{ for all }r\in[-\|\dvr u\|_{L^\infty(\Omega)},\|\dvr u\|_{L^\infty(\Omega)}]
\end{equation*}
and then
\begin{equation}\label{LambdaCond}
\eta(r)+\eta'(r)r>0\text{ for all }r\in\mathbb{R}.
\end{equation}
We note that conditions \eqref{MuCond1}, \eqref{MuCond2} and \eqref{LambdaCond} are also sufficient for the ellipticity of $\mathcal{A}(\mathbb Du)$. If $\mu'(|\SymGDev u|^2)\geq 0$ these conditions ensure that 
\begin{equation}\label{AuxStepEl1}
\sum_{j,k,l,m = 1}^d a^{lm}_{jk}\xi_{jl}\xi_{km}\geq 2\mu(|\SymGDev u|^2)|\xi^D|^2+(\eta(\dvr u)+\eta'(\dvr u)\dvr u)(\tr\xi)^2.
\end{equation}
From the continuity of $\mu$ and the function $r\mapsto\eta(r)+\eta'(r)r$ and from the assumption $u\in C^1(\overline\Omega)^d$ we conclude the existence of $s_{min}\in[0,\|\SymGDev u\|^2_{L^\infty(\Omega)}]$ and $r_{min}\in[-\|\dvr u\|_{L^ \infty},\|\dvr u\|_{L^ \infty}]$ such that $\mu(s)\geq \mu(s_{min})>0$ for all $s\in[0,\|\SymGDev u\|^2_{L^\infty(\Omega)}]$ and $\eta(r)+\eta'(r)r\geq \eta(r_{min})+\eta'(r_{min})r_{min}>0$ for all $r\in[-\|\dvr u\|_{L^ \infty},\|\dvr u\|_{L^ \infty}]$. It follows from \eqref{AuxStepEl1} that
\begin{equation}\label{AuxStepEl2}
\sum_{j,k,l,m=1}^d a^{lm}_{jk}\xi_{jl}\xi_{km}\geq 2\mu(s_{min})|\xi^D|^2+(\eta(r_{min})+\eta'(r_{min})r_{min})(\tr\xi)^2.
\end{equation}
Moreover, since one can easily check that the mapping $\xi\mapsto \sqrt{|\xi^D|^2+(\tr \xi)^2}$ is an equivalent norm on $\mathbb{R}^{d\times d}$, the ellipticity of $\mathcal{A}(u)$ follows from \eqref{AuxStepEl2} by setting $C_{el}=\min\{2\mu(s_{min}),\eta(r_{min})+\eta'(r_{min})r_{min})\}$.

If $\mu'(|\SymGDev u|^2)< 0$ one applies the Cauchy--Schwarz inequality in \eqref{BilFormDet} to obtain
\begin{multline}\label{AuxStepEl3}
\sum_{j,k,l,m=1}^da^{lm}_{jk}\xi_{jl}\xi_{km}\geq\\ 2\mu(|\SymGDev u|^2)|\xi^D|^2+4\mu'(|\SymGDev u|^2)|\SymGDev u|^2|\xi^D|^2+\left(\eta(\dvr u)+\eta'(\dvr u)\dvr u\right)(\tr \xi)^2.
\end{multline}
The continuity of $s\mapsto \mu(s)+2\mu'(s)s$ and the assumption $u\in C^1(\overline\Omega)^d$ imply the existence of $\tilde s_{min}\in[0,\|\SymGDev u\|^2_{L^\infty(\Omega)}]$ such that $\mu(s)+2\mu'(s)s\geq \mu(\tilde s_{min})+2\mu'(\tilde s_{min})\tilde s_{min}>0$ for all $s\in[0,\|\SymGDev u\|^2_{L^\infty(\Omega)}]$. Going back to \eqref{AuxStepEl3} the ellipticity of $\mathcal{A}(\mathbb Du)$ follows by setting $C_{el}=\min\{\mu(\tilde s_{min})+2\mu'(\tilde s_{min})\tilde s_{min},\eta(r_{min})+\eta'(r_{min})r_{min}\}$.

\section{Local-in-time well posedness  of the compressible Navier-Stokes}
\label{Bfp}
In this section we present a proof of Theorem \ref{Thm:Main}. First, we transform system \eqref{SystemClass} into Lagrangian coordinates. Then a linearization of the transformed system is derived. Next, employing Theorem \ref{thm:regularita} we show that the solution operator to the linearized system is a contraction on a suitably chosen function space. Finally, by the Banach fixed point theorem we conclude the local-in-time well posedness of the transformed system and of \eqref{SystemClass} accordingly.
\subsection{System in Lagrangian coordinates}

We define the Lagrangian  coordinates as
\begin{equation}\label{LCoord}
\begin{split}
X_u(t,y) & = y + \int_0^t u(s,X_u(s,y)) \ {\rm d}s.
\end{split}
\end{equation}
A general function $f(t,x):\Omega\mapsto \mathbb R$ fulfills
\begin{equation*}
\begin{split}
\partial_t f(t, X_u(t,y)) &= \partial_1 f(t, X_u(t,y)) + \nabla_x f(t, X_u(t,y)) \cdot u\\
\nabla_y f(t,X_u(t,y)) & = \nabla_x f(t, X_u(t,y)) \nabla_y X_u (t,y)\\
\nabla_x f & = \nabla_y f (\nabla_y X_u)^{-1}.
\end{split}
\end{equation*}
The Jacobi matrix of the transformation $X_u$ is $\mathbb I_d+\int_0^t\nabla_y u(s,X_u(s,y))\ds$. We tacitly assume the invertibility of this matrix, which is ensured if 
\begin{equation}\label{SigmaCond}
\sup_{t\in(0,T)}\left\|\int_0^t\nabla_y u(s,\cdot)\right\|_{L^\infty(\Omega)}\ds< \sigma
\end{equation}
for some small number $\sigma$. Since we will work with functions that possess Lipschitz regularity with respect to space variables, the latter condition is fulfilled if $T$ is chosen suitably small.
In what follows we use the notation $E_u:= \mathbb I_d - (\nabla X_u)^{-1}$. Let us note that we get from \eqref{LCoord}
\begin{equation}\label{EExpr}
E_u(t,y)=W\left(\int_0^t\nabla u(s,X_u(s,y))\ds\right),
\end{equation}
where $W$ is a $d\times d$--matrix valued mapping that is smooth with respect to matrices $B\in\mathbb{R}^{d\times d}$ with $|B|<2\sigma$ and $W(0)=0$.
We define $\tilde \varrho (t,y) = \varrho(t,X(t,y))$ and $\tilde u (t,y)= u(t,X(t,y))$ and we rewrite \eqref{SystemClass} as
\begin{equation}\label{eq:NS.Lagrange}
\begin{split}
\pat \tilde\varrho + \tilde\varrho \dvr_y \tilde u &= G(\tilde \varrho, \tilde u)\\
\tilde\varrho \pat \tilde u  + \nabla_y \pi(\tilde \varrho) - \dvr_y \CrlS(\mathbb D_y \tilde u) &= F(\tilde \varrho,\tilde u)
\end{split}
\end{equation}
where 
\begin{equation*}
\begin{split}
G(\tilde \varrho,\tilde u ) &= - \nabla \tilde u \cdot E_{\tilde u}\tilde \varrho\\
F(\tilde \varrho, \tilde u) & = \nabla_y \pi(\tilde\varrho) E_{\tilde u} + \dvr_y\tilde{\mathcal{S}}(\tilde {u}) - \dvr_y \mathcal{S}(\SymGDev_y\tilde u) - \tr(\nabla_y\tilde {\mathcal S}(\tilde u)
E_{\tilde u})\\
\mathcal{S}(\SymGDev_y\tilde u)&=\mu(|\SymGDev_y\tilde u|^2)\SymGDev_y\tilde u+\eta(\dvr_y \tilde u)\dvr_y\tilde u\IdM\\
\tilde{\mathcal{S}}(\tilde u)&=\mu(|\mathcal G(\tilde u)|^2)\mathcal G(\tilde u)+\eta(\mathcal{D}(\tilde u))\mathcal D(\tilde u)\IdM\\
\mathcal{G}(\tilde u)&=\SymGDev_y\tilde u - \sym (\nabla_y \tilde u  E_{\tilde u})^{D},\ \mathcal D(\tilde u)=\dvr_y \tilde u-\nabla_y \tilde u\cdot E_{\tilde u}.
\end{split}
\end{equation*}

\subsection{Linearized system}
Let $\tilde\varrho = \varrho_* + \theta_0 + \theta$ where $\varrho_*$ is a constant and $\theta_0 = \varrho_0 - \varrho_*$ is independent of $t$. Recall both $\varrho_*$ and $\theta_0$ are given. Consequently, \eqref{eq:NS.Lagrange} becomes (recall \eqref{eq:def.A})
\begin{equation}
\begin{split}\label{eq:linearized}
\pat \theta + (\varrho_* + \theta_0) \dvr_y (\tilde u  - u_0)& = \mathscr G (\theta,\tilde u)\\
(\varrho_* + \theta_0) \pat \tilde u - \mathcal A(\mathbb D_y u_0)(\mathbb D_y(\tilde u-u_0)) + \pi'(\varrho_* + \theta_0)\nabla_y \theta & = \mathscr F(\theta,\tilde u)
\end{split}
\end{equation}
where 
\begin{equation}\label{RHSExpr}
\begin{split}
\mathscr G (\theta,\tilde u)  =& G(\varrho_* + \theta_0 + \theta,\tilde u) - \theta \dvr_y \tilde u - (\varrho_* + \theta_0)\dvr_y u_0\\
\mathscr F (\theta,\tilde u)  =& F(\varrho_* + \theta_0 + \theta, \tilde u) - \theta \tder \tilde u + \dvr_y \mathcal{S}(\SymGDev_y\tilde u) - \mathcal A(\mathbb D_yu_0)(\mathbb D_y(\tilde u-u_0)) + \pi'(\varrho_* + \theta_0) \nabla_y \theta_0  \\&
+ \left(\pi'(\varrho_* + \theta_0 + \theta) - \pi'(\varrho_* + \theta_0)\right)\nabla_y(\theta_0 + \theta)
\end{split}
\end{equation}
and the system is equipped with the initial conditions $\theta(0,\cdot)=0$, $\tilde u(0,\cdot)=u_0$.
In order to shorten the notation we omit the subscript $y$ in the rest of this section.

\subsection{The fix-point argument}
We define
\begin{align*}
H_{T,M}=\{&(\vartheta,w)\in W^{1,p}(0,T;W^{1,q}(\Omega))\times \mathcal V^{p,q}(Q_T):\
\vartheta(0)=0, w(0)=u_0,\ \\
&[\vartheta,w]_{T,M}=\|\vartheta\|_{W^{1,p}(0,T;W^{1,q}(\Omega))}+\|w-u_0\|_{\mathcal{V}^{p,q}}\leq M\}
\end{align*}
and an operator $\Phi$ as
\begin{equation*}
\Phi(\vartheta,w) = (\theta, \tilde u)
\end{equation*}
where $\theta\in W^{1,p}(0,T;W^{1,q}(\Omega))$, $\tilde u\in L^p(0,T;W^{2,q}(\Omega)^d)\cap W^{1,p}(0,T;L^q(\Omega)^d)$ is a solution to 
\begin{equation*}
\begin{split}
\pat \theta + (\varrho_* + \theta_0) \dvr (\tilde u -u_0)&= \mathscr G(\vartheta,w)\\
(\varrho_* + \theta_0)\pat \tilde u - \mathcal A(\mathbb Du_0)(\mathbb D(\tilde u-u_0)) + \pi'(\varrho_* + \theta_0)\nabla\theta & = \mathscr F(\vartheta,w)\\
\theta(0)=0,\ \tilde u(0)=u_0.
\end{split}
\end{equation*}

In order to apply the Banach fixed-point theorem, we need to justify that for certain values of $T$ and $M$ the operator $\Phi$ is a contraction on $H_{T,M}$. 
Next we introduce several estimates. We recall the embedding inequality
\begin{equation}\label{EmbIneq}
\sup_{t\in[0,T]}\|u\|_{W^{1,\infty}(\Omega)}\leq c\left(\|u\|^p_{L^p(0,T;W^{2,q}(\Omega))}+\|u\|^p_{W^{1,p}(0,T;L^q(\Omega))}\right)^\frac{1}{p},
\end{equation}
see \cite[p. 10]{Ta} for details, which is valid for $u\in\mathcal V^{p,q}(Q_T)$, $p\in\left(\frac{2q}{q-d},\infty\right)$ and $q>d$.  It follows that
\begin{equation*} 
\|w\|_{\mathcal V^{p,q}(Q_T))}\in[T^\frac{1}{p}\|u_0\|_{W^{2,q}(\Omega)}-M,T^\frac{1}{p}\|u_0\|_{W^{2,q}(\Omega)}+M].
\end{equation*} 
for an arbitrary $w\in B_M=\{v\in\mathcal V^{p,q}(Q_T):v(0)=u_0, \|v-u_0\|_{\mathcal V^{p,q}(Q_T)}\leq M\}$.
Accordingly, we have $\|w\|_{\mathcal V^{p,q}(Q_T)}\leq M+LT^\frac{1}{p}$ where \begin{equation*}
L=\|\theta_0\|_{W^{1,q}(\Omega)}+\|u_0\|_{W^{2,q}(\Omega)}.
\end{equation*}
Employing \eqref{EmbIneq} we deduce that 
\begin{equation}\label{SymGDivBound}
\|\SymGDev w\|_{L^\infty(0,T;L^\infty(\Omega))}+\|\dvr w\|_{L^\infty(0,T;L^\infty(\Omega))}\leq c\|w\|_{\mathcal V^{p,q}(Q_T)}\leq c(M+LT^\frac{1}{p}).
\end{equation}
By the Sobolev embedding theorem and the H\"older inequality we infer 
\begin{equation*}
\int_0^T\|\nabla w(s,\cdot)\|_{L^\infty(\Omega)}\ds\leq \int_0^T\|w(s,\cdot)\|_{W^{2,q}(\Omega)}\ds\leq cT^\frac{1}{p'}\|w\|_{L^p(0,T;W^{2,q}(\Omega))}.
\end{equation*}
Hence  the condition \eqref{SigmaCond} is fulfilled for any $w\in H_{T,M}$ assuming $T$ is suitably small.
Moreover, it follows from \eqref{EExpr} and the paragraph below it that 
\begin{equation}\label{EEst}
\|E_w(t)\|_{W^{k,q}(\Omega)}\leq c\left\|\int_0^t\nabla w\ {\rm d}s\right\|_{W^{k,q}(\Omega)}\leq c\int_0^t\|w\|_{W^{k+1,q}(\Omega)}\ {\rm d}s,\ k=0,1.
\end{equation}
The Sobolev embedding then yields (since $q>d$)
\begin{equation}\label{SupEEst}
\|E_w(t)\|_{L^\infty(0,T;L^\infty(\Omega))}\leq cT^\frac{1}{p'}\|w\|_{L^p(0,T;W^{2,q}(\Omega))}
\end{equation}
and we use H\"older inequality to deduce
\begin{equation}\label{ELPQEst}
\|E_w\|_{L^p(0,T;W^{1,q}(\Omega))}\leq c\left(\int_0^Tt^\frac{p}{p'}\|\nabla w\|^p_{L^p(0,T;W^{1,q}(\Omega))}\ {\rm d}t\right)^\frac{1}{p}\leq cT\|w\|_{L^p(0,T;W^{2,q}(\Omega))}.
\end{equation}
By \eqref{SupEEst} and \eqref{EmbIneq} we infer
\begin{align*}
\|\mathcal{G}(w)\|_{L^\infty(0,T;L^\infty(\Omega))}\leq &c\|\nabla w\|_{L^\infty(0,T;L^\infty(\Omega))}(1+\|E_{w}\|_{L^\infty(0,T;L^\infty(\Omega))})\\
\leq &c\|w\|_{\mathcal V^{p,q}(Q_T)}(1+T^\frac{1}{p'}\|w\|_{L^p(0,T;W^{2,q}(\Omega))})
\end{align*}
and thus
\begin{equation}\label{CrlGEst}
\|\mathcal{G}(w)\|_{L^\infty(0,T;L^\infty(\Omega))}\leq c(M+LT^\frac{1}{p})(1+T^\frac{1}{p'}(M+LT^\frac{1}{p}))
\end{equation}
due to \eqref{SymGDivBound}. One similarly deduces
\begin{equation}\label{CrlDEst}
\|\mathcal{D}(w)\|_{L^\infty(0,T;L^\infty(\Omega))}\leq c(M+LT^\frac{1}{p})(1+T^\frac{1}{p'}(M+LT^\frac{1}{p})).
\end{equation}
From now on we restrict ourselves to $T\leq 1$ and fix $R>c(1+L^2)$, where $c$ is the maximum of constants from \eqref{SymGDivBound}, \eqref{CrlGEst} and \eqref{CrlDEst}. Moreover, we assume that $M$ fulfills $R\geq c(1+L+M)^2$ and define the quantity $K$ as follows
\begin{align*}
K=\sup_{s\in[0,R^2]}&\sum_{j=0}^3|\mu^{(j)}(s)|+\sup_{t\in[-R,R]}\sum_{j=0}^2|\eta^{(j)}(t)|.
\end{align*}
Obviously, the assumed smoothness of functions $\mu$ and $\eta$ ensures that $K$ is finite.
Furthermore, we define
\begin{equation*}
\Pi=\sup_{r\in[-\tilde cM,\tilde c M]}  \sum_{j=0}^2\|\pi^{(j)}(\varrho_*+\theta_0+r)\|_{L^\infty(Q_T)}
\end{equation*}
where $\tilde c$ is such that $\|\theta\|_{L^\infty(0,T;L^\infty(\Omega))}\leq \tilde c M$ for all $(\theta, w)\in H_{T,M}$ (see also \eqref{ThetaEst}).
The fact that $\Pi$ is finite follows as $\varrho_*+\theta_0$ is a continuous function by the Sobolev embedding and the assumption $\varrho_*+\theta_0\in W^{1,q}(\Omega)$, $q>d$.

First, we check that $\Phi:H_{T,M}\to H_{T,M}$ if $T$ and $M$ are chosen appropriately. To this end we fix $(\vartheta,w)\in H_{T,M}$ and estimate $\mathscr G (\vartheta,w)$, $\mathscr F (\vartheta,w)$ respectively. 
As $\vartheta(0)=0$, we get for $t\in[0,T]$ using the H\"older inequality
\begin{equation*}
\|\vartheta(t)\|_{W^{1,q}(\Omega)}\leq \int_0^t\|\tder\vartheta\|_{W^{1,q}(\Omega)}\ {\rm d}s\leq T^\frac{1}{p'}\|\tder\vartheta\|_{L^p(0,T;W^{1,q}(\Omega))}
\end{equation*}
and consequently
\begin{equation}\label{ThetaEst}
\|\vartheta\|_{L^\infty(0,T;W^{1,q}(\Omega))}\leq T^\frac{1}{p'}\|\vartheta\|_{W^{1,p}(0,T;W^{1,q}(\Omega))}.
\end{equation}
Let us focus on the estimate of $\mathscr G (\vartheta,w)$. Using the Sobolev embedding, \eqref{EEst}, \eqref{ThetaEst} and the H\"older inequality it follows that
\begin{equation}\label{RhsGEst}
\begin{split}
\|\nabla w\cdot E_w(\varrho_*+\theta_0+\vartheta)\|_{W^{1,q}(\Omega)}&\leq c\|w\|_{W^{2,q}(\Omega)}\|E_w\|_{W^{1,q}}\|\varrho_*+\theta_0+\vartheta\|_{W^{1,q}(\Omega)}\\
&\leq c \|w\|_{W^{2,q}(\Omega)}T^\frac{1}{p'}\|w\|_{L^p(0,T;W^{2,q}(\Omega))}\left(\|\varrho_*+\theta_0\|_{W^{1,q}(\Omega)}+\|\vartheta\|_{W^{1,q}(\Omega)}\right)\\
&\leq c(M+L)^2T^\frac{1}{p'}\|w\|_{W^{2,q}(\Omega)}
\end{split}
\end{equation}
and
\begin{equation}\label{RhsGEst2}
\|\vartheta \dvr w\|_{W^{1,q}(\Omega)}\leq c\|\vartheta\|_{W^{1,q}(\Omega)}\|w\|_{W^{2,q}(\Omega)}\leq cT^\frac{1}{p'}M\|w\|_{W^{2,q}(\Omega)}.
\end{equation}
In order to proceed with the bound of $\mathscr F(\vartheta, w)$ we begin with some preparatory work. The assumed smoothness of $\mu$ and $\eta$ yields by the mean value theorem
\begin{equation}\label{MuDiff}
\begin{split}
\mu^{(k)}(|A|^2)-\mu^{(k)}(|B|^2)=&\int_0^1 \frac{\dd}{\ds}\mu^{(k)}(|sA+(1-s)B|^2)\ds\\
=&2\int_0^1 \mu^{(k+1)}(|sA+(1-s)B|^2)(sA+(1-s)B)\cdot(A-B)\ds\ k=0,1,2,
\end{split}
\end{equation}
\begin{equation}\label{MuPrDiff}
\begin{split}
&\mu'(|A|^2)|A|^2-\mu'(|B|^2)|B|^2=\int_0^1 \frac{\dd}{\ds}\left(\mu'(|sA+(1-s)B|^2)|sA+(1-s)B|^2\right)\ds\\
&=2\int_0^1 (sA+(1-s)B)\cdot(A-B)\left(\mu''(|sA+(1-s)B|^2)|sA+(1-s)B|^2+\mu'(|sA+(1-s)B|^2)\right)\ds,
\end{split}
\end{equation}
\begin{equation}\label{MuDivDiff}
\begin{split}
\mu(|A|^2)\dvr A-\mu(|B|^2)\dvr B=&\int_0^1 \frac{\dd}{\ds} \left(\mu(|sA+(1-s)B|^2)\dvr (sA+(1-s)B)\right)\ds\\
=&\int_0^1 2\mu'(|sA+(1-s)B|^2)(sA+(1-s)B)\cdot(A-B)\dvr (sA+(1-s)B)\\&+\mu(|sA+(1-s)B|^2)\dvr(A-B)\ds\\
\end{split}
\end{equation}
and
\begin{equation}\label{MuPrDivDiff}
\begin{split}
\mu'(&|A|^2)|A|^2\dvr A-\mu'(|B|^2)|B|^2\dvr B\\
=&\int_0^1 \frac{\dd}{\ds} \left(\mu'(|sA+(1-s)B|^2)|sA+(1-s)B|^2\dvr (sA+(1-s)B)\right)\ds\\
=&\int_0^1 2(sA+(1-s)B)\cdot(A-B)\left(\mu''(|sA+(1-s)B|^2)|sA+(1-s)B|^2\right.\\
&\left.+\mu'(|sA+(1-s)B|^2)\right)\dvr (sA+(1-s)B)\\
&+\mu'(|sA+(1-s)B|^2)|sA+(1-s)B|^2\dvr(A-B)\ds
\end{split}
\end{equation}
assuming $A,B$ are $d\times d$--matrix valued sufficiently smooth functions.
Moreover,  it follows that
\begin{equation}\label{LambdaDiff}
\eta(a)-\eta(b)=\int_0^1 \eta'(sa+(1-s)b)(a-b)\ds,
\end{equation}
\begin{equation*}
\begin{split}
\eta'(a)a-\eta'(b)b=&\int_0^1 \frac{\dd}{\ds}\left(\eta'(sa+(1-s)b)(sa+(1-s)b)\right)\ds\\=&\int_0^1\left( \eta'(sa+(1-s)b)+\eta''(sa+(1-s)b)(sa+(1-s)b)\right)(a-b)\ds,
\end{split}
\end{equation*}
\begin{equation}\label{LambdaNabDiff1}
\begin{split}
\eta(a)\nabla a-\eta(b)\nabla b=&\int_0^1 \frac{\dd}{\ds}\left(\eta(sa+(1-s)b)\nabla (sa+(1-s)b)\right)\ds\\
=& \int_0^1\eta'(sa+(1-s)b)(a-b)\nabla (sa+(1-s)b) + \eta(sa+(1-s)b)\nabla(a-b)\ds
\end{split}
\end{equation}
\begin{equation}\label{LambdaNabDiff2}
\begin{split}
\eta'(a)a\nabla a-\eta'(b)b\nabla b=&\int_0^1 \frac{\dd}{\ds}\left(\eta'(sa+(1-s)b)(sa+(1-s)b)\nabla (sa+(1-s)b)\right)\ds\\
=&\int_0^1 \eta''(sa+(1-s)b)(a-b)(sa+(1-s)b)\nabla (sa+(1-s)b)\\
&+\eta'(sa+(1-s)b)(a-b)\nabla (sa+(1-s)b)\\
&+\eta'(sa+(1-s)b)(sa+(1-s)b)\nabla (a-b)\ds
\end{split}
\end{equation}
for arbitrary real valued functions $a,b$.

Recalling \eqref{eq:ACoef} we get using \eqref{MuDiff}, \eqref{MuPrDiff}, \eqref{LambdaDiff}, \eqref{LambdaNabDiff1}, \eqref{LambdaNabDiff2}, \eqref{EmbIneq} and the Jensen inequality
\begin{equation}\label{ACoefDifEst}
\begin{split}
&\|a_{jk}^{lm}(\mathbb Du)-a_{jk}^{lm}(\mathbb Dv)\|_{L^\infty(0,T;L^\infty(\Omega))}\\&\leq K\int_0^1\left(1+\|s\SymGDev u+(1-s)\SymGDev v\|^2_{L^\infty(0,T;L^\infty(\Omega))}\right)\|s\SymGDev u+(1-s)\SymGDev v\|_{L^\infty(0,T;L^\infty(\Omega))}\\
&\times\|\SymGDev (u-v)\|_{L^\infty(0,T;L^\infty(\Omega))}+(1+\|s\dvr u+(1-s)\dvr v\|_{L^\infty(0,T;L^\infty(\Omega))})\|\dvr(u-v)\|_{L^\infty(0,T;L^\infty(\Omega))}\ds\\
&\leq cK(1+\|u\|_{\mathcal V^{p,q}(Q_T)}+\|v\|_{\mathcal V^{p,q}(Q_T)}+\|u\|^3_{\mathcal V^{p,q}(Q_T)}+\|v\|^3_{\mathcal V^{p,q}(Q_T)})\|u-v\|_{\mathcal V^{p,q}(Q_T)},
\end{split}
\end{equation}
whenever $u,v\in \mathcal V^{p,q}(Q_T)$.

We can proceed with the bound of \eqref{RHSExpr}$_2$. First, we estimate using \eqref{CrlGEst}, \eqref{CrlDEst} and \eqref{SupEEst}
\begin{equation}\label{TrSEEst}
\begin{split}
\|\tr(\nabla \tilde{\mathcal{S}}(w)E_{w})\|_{L^p(0,T;L^q(\Omega))}\leq&K \left((1+\|\mathcal G(w)\|_{L^\infty(0,T;L^\infty(\Omega))})\|\nabla \mathcal G(w)\|_{L^p(0,T;L^q(\Omega))}\right.\\
&\left.+(1+\|\mathcal D(w)\|_{L^\infty(0,T;L^\infty(\Omega))})\|\nabla\mathcal D(w)\|_{L^p(0,T;L^q(\Omega))}\right)\|E_{w}\|_{L^\infty(0,T;L^\infty(\Omega))}\\
\leq& cKT^\frac{1}{p'}(M+L)^2(1+(M+L)(1+M+L))(1+M+L).
\end{split}
\end{equation}
as it follows that
\begin{equation*}
\begin{split}
\|\nabla\mathcal G(w)\|_{L^p(0,T;L^q(\Omega))}\leq& \|w\|_{L^p(0,T;W^{2,q}(\Omega))}(1+\|E_w\|_{L^\infty(0,T;L^\infty(\Omega))})\\
&+\|w\|_{L^\infty(0,T;L^\infty(\Omega))}\|\nabla E_w\|_{L^p(0,T;L^q(\Omega))}\\
\leq& c\|w\|_{\mathcal V^{p,q}(Q_T)}(1+\|w\|_{\mathcal V^{p,q}(Q_T)})\leq c(M+L)(1+M+L)
\end{split}
\end{equation*}
and
\begin{equation*}
\begin{split}
\|\nabla\mathcal D(w)\|_{L^p(0,T;L^q(\Omega))}\leq& \|w\|_{L^p(0,T;W^{2,q}(\Omega))}(1+\|E_w\|_{L^\infty(0,T;L^\infty(\Omega))})\\
&+\|w\|_{L^\infty(0,T;L^\infty(\Omega))}\|\nabla E_w\|_{L^p(0,T;L^q(\Omega))}\\
\leq& c\|w\|_{\mathcal V^{p,q}(Q_T)}(1+\|w\|_{\mathcal V^{p,q}(Q_T)})\leq c(M+L)(1+M+L)
\end{split}
\end{equation*}
by \eqref{SupEEst} and \eqref{ELPQEst}.

Then we continue by expanding the divergence to arrive at
\begin{equation*}
\begin{split}
&\dvr(\tilde{\mathcal S}(w)-\mathcal{S}(\SymGDev w))=\mu(|\mathcal G(w)|^2)\dvr\mathcal G(w)-\mu(|\SymGDev w|^2)\dvr\SymGDev w\\&+2\mu'(|\mathcal G(w)|^2)|\mathcal G(w)|^2\dvr\mathcal G(w)-2\mu(|\SymGDev w|^2)|\SymGDev w|^2\dvr\SymGDev w\\
&+\eta(\mathcal D(w))\nabla\mathcal D(w)-\eta(\dvr w)\nabla\dvr w\\
&+\eta'(\mathcal D(w))\mathcal D(w)\nabla\mathcal D(w)-\eta'(\dvr w)\dvr w\nabla\dvr w=\sum_{i=1}^4I_j.
\end{split}
\end{equation*}
We estimate each $I_j$ separately. We observe that denoting 
\begin{equation*}
\mathcal G_s=s(\SymGDev w-\sym(\nabla wE_w)^D)+(1-s)\SymGDev w
\end{equation*}
it follows by \eqref{MuDivDiff} that
\begin{align*}
I_1=-\int_0^1 2\mu'(|\mathcal G_s|^2)\mathcal G_s\cdot\sym(\nabla wE_w)^D\dvr\mathcal G_s+\mu(|\mathcal G_s|^2)\dvr\sym(\nabla wE_w)^D\ds.
\end{align*}
Further we employ the estimates
\begin{equation}\label{GSEst}
\begin{split}
\|\mathcal G_s\|_{L^\infty(0,T;L^\infty(\Omega))}\leq& c\|\SymGDev w\|_{L^\infty(0,T;L^\infty(\Omega))}+\|\nabla w\|_{L^\infty(0,T;L^\infty(\Omega))}\|E_w\|_{L^\infty(0,T;L^\infty(\Omega))}
\\
\leq& c\|w\|_{\mathcal V^{p,q}(Q_T)}(1+T^\frac{1}{p'}\|w\|_{\mathcal{V}^{p,q}(Q_T)}),
\end{split}
\end{equation}
\begin{equation}\label{DivGSEst}
\begin{split}
\|\dvr\mathcal G_s \|_{L^p(0,T;L^q(\Omega))}\leq& c\|w\|_{L^p(0,T;L^q(\Omega))}(1+\|E_{w}\|_{L^\infty(0,T;L^\infty(\Omega))})+\|\nabla w\|_{L^\infty(0,T;L^\infty(\Omega))}\|\nabla E_w\|_{L^p(0,T;L^q(\Omega))}\\
\leq&c\|w\|_{\mathcal V^{p,q}(Q_T)}(1+(T^\frac{1}{p'}+T)\|w\|_{\mathcal V^{p,q}(Q_T)}),
\end{split}
\end{equation}
\begin{equation}\label{DivSymEst}
\begin{split}
\|\dvr\sym(\nabla wE_w)^D\|_{L^p(0,T;L^q(\Omega))}\leq&c\|\nabla^2 w\|_{L^p(0,T;L^q(\Omega))}\|E_w\|_{L^\infty(0,T;L^\infty(\Omega))}\\
&+\|\nabla w\|_{L^\infty(0,T;L^\infty(\Omega))}\|\nabla E_w\|_{L^p(0,T;L^q(\Omega))}\\
\leq&c\|w\|^2_{\mathcal V^{p,q}(Q_T)}(T^\frac{1}{p'}+T)
\end{split}
\end{equation}
that follow by \eqref{EEst}, \eqref{ELPQEst} and \eqref{EmbIneq}.
Employing the Jensen inequality, \eqref{GSEst}, \eqref{DivGSEst} and \eqref{DivSymEst} we infer
\begin{equation}\label{I1Est}
\begin{split}
\|&I_1\|_{L^p(0,T;L^q(\Omega))}\\
\leq&cK\|w\|^2_{\mathcal V^{p,q}(Q_T)}(1+T^\frac{1}{p'}\|w\|_{\mathcal V^{p,q}(Q_T)})\|\nabla w\|_{L^\infty(0,T;L^\infty(\Omega))}\|E_w\|_{L^\infty(0,T;L^\infty(\Omega))}(1+(T^\frac{1}{p'}+T)\|w\|_{\mathcal V^{p,q}(Q_T)})\\
&+cK\|w\|^2_{\mathcal V^{p,q}(Q_T)}(T^\frac{1}{p'}+T)\\
\leq& cK \|w\|^2_{\mathcal V^{p,q}(Q_T)}\left((1+T^\frac{1}{p'}\|w\|_{\mathcal V^{p,q}(Q_T)})T^\frac{1}{p'}\|w\|^2_{\mathcal V^{p,q}(Q_T)} (1+(T^\frac{1}{p'}+T)\|w\|_{\mathcal V^{p,q}(Q_T)})+T^\frac{1}{p'}+T\right)\\
\leq& cK(M+L)^2\left((1+(M+L))^2T^\frac{1}{p'}(M+L)^2+T^\frac{1}{p'}+T\right),
\end{split}
\end{equation}
where the Sobolev embedding, interpolation inequality \eqref{EmbIneq} and \eqref{ELPQEst} were also employed.
By \eqref{MuPrDivDiff} we have
\begin{multline*}
I_2 =-\int_0^1 2\left(\mu''(|\mathcal G_s|^2)|\mathcal G_s|^2+\mu'(|\mathcal G_s|^2)\right)\mathcal G_s\cdot\sym(\nabla wE_w)^D\dvr\mathcal G_s\\+\mu(|\mathcal G_s|^2)|\mathcal G_s|^2\dvr\sym(\nabla wE_w)^D\ds.
\end{multline*}
Then using  \eqref{GSEst}, \eqref{DivGSEst} and \eqref{DivSymEst} we obtain
\begin{equation}\label{I2Est}
\begin{split}
\|&I_2\|_{L^p(0,T;L^q(\Omega))}\\
\leq &cK\left((\|w\|^2_{\mathcal V^{p,q}(Q_T)}(1+T^\frac{1}{p'}\|w\|_{\mathcal V^{p,q}(Q_T)})^2+1)\|w\|_{\mathcal V^{p,q}(Q_T)}(1+T^\frac{1}{p'}\|w\|_{\mathcal V^{p,q}(Q_T)})\|\nabla w\|_{L^\infty(0,T;L^\infty(\Omega))}\right.\\
&\left.\times \|E_w\|_{L^\infty(0,T;L^\infty(\Omega))}\|w\|_{\mathcal V^{p,q}(Q_T)}(1+(T^\frac{1}{p'}+T)\|w\|_{\mathcal V^{p,q}(Q_T)})+\|w\|^4_{\mathcal V^{p,q}(Q_T)}(1+T^\frac{1}{p'}\|w\|_{\mathcal V^{p,q}(Q_T)})(T^\frac{1}{p'}+T)\right)\\
\leq& cK\left(((M+L)^2(1+(M+L)^2)+1)(M+L)^4(1+(M+L))T^\frac{1}{p'}(1+(M+L))\right.\\
&\left.+(M+L)^4(1+(M+L))(T^\frac{1}{p'}+T)\right).
\end{split}
\end{equation}
Denoting 
\begin{equation*}
\mathcal D_s=s(\dvr w-\nabla w\cdot E_w)+(1-s)\dvr w
\end{equation*}
we obtain by \eqref{LambdaNabDiff1}
\begin{align*}
I_3=-\int_0^1 \eta'(\mathcal D_s)\nabla w\cdot E_w\nabla\mathcal D_s+\eta(\mathcal D_s)\nabla(\nabla w\cdot E_w)\ {\rm d}s.
\end{align*}
Moreover, one has by \eqref{EmbIneq} and \eqref{EEst}
\begin{equation}\label{DSEst}
\begin{split}
\|\mathcal D_s\|_{L^\infty(0,T;L^\infty(\Omega))}\leq& c\|\nabla w\|_{L^\infty(0,T;L^\infty(\Omega))}(1+\|E_w\|_{L^\infty(0,T;L^\infty(\Omega))})\\
\leq& c\|w\|_{\mathcal V^{p,q}(Q_T)}(1+T^\frac{1}{p'}\|w\|_{\mathcal V^{p,q}(Q_T)})
\end{split}
\end{equation}
and
\begin{equation}\label{NabDSEst}
\begin{split}
\|\nabla\mathcal D_s\|_{L^p(0,T;L^q(\Omega))}\leq& c\|\nabla^2w\|_{L^p(0,T;W^{2,q}(\Omega))}+\|\nabla^2w\|_{L^p(0,T;W^{2,q}(\Omega))}\|E_w\|_{L^\infty(0,T;L^\infty(\Omega))}\\
&+\|\nabla w\|_{L^\infty(0,T;L^\infty(\Omega))}\|\nabla E_w\|_{L^p(0,T;L^q(\Omega))}\\
\leq& c\|w\|_{\mathcal V^{p,q}(Q_T)}\left(1+(T^\frac{1}{p'}+T)\|w\|_{\mathcal V^{p,q}(Q_T)}\right).
\end{split}
\end{equation}
Then it follows by \eqref{EmbIneq}, \eqref{EEst}, \eqref{DSEst} and \eqref{NabDSEst}
\begin{equation}\label{I3Est}
\begin{split}
\|I_3\|_{L^p(0,T;L^q(\Omega))}\leq &cK\left(\|\nabla w\|_{L^\infty(0,T;L^\infty(\Omega))}\|E_w\|_{L^\infty(0,T;L^\infty(\Omega))}\|\nabla \mathcal D_s\|_{L^p(0,T;L^q(\Omega))}\right.\\
&\left.+\|\nabla^2 w\|_{L^p(0,T;L^q(\Omega))}\|E_w\|_{L^\infty(0,T;L^\infty(\Omega))}+\|\nabla w\|_{L^\infty(0,T;L^\infty(\Omega))}\|\nabla E_w\|_{L^p(0,T;L^q(\Omega))}\right)\\
\leq &cK\left(T^\frac{1}{p'}\|w\|^3_{\mathcal V^{p,q}(Q_T)}\left(1+(T^\frac{1}{p'}+T)\|w\|_{\mathcal V^{p,q}(Q_T)}\right)+(T^\frac{1}{p'}+T)\|w\|^2_{\mathcal V^{p,q}(Q_T)}\right)\\
\leq&cK\left(T^\frac{1}{p'}(M+L)^3\left(1+(M+L)\right)+(T^\frac{1}{p}+T)(M+L)^2\right).
\end{split}
\end{equation}
By \eqref{LambdaNabDiff2} we obtain
\begin{align*}
I_4=&-\int_0^1 \eta''(\mathcal D_s)\nabla w\cdot E_w\mathcal D_s\nabla\mathcal D_s+\eta'(\mathcal D_s)\nabla w\cdot E_w\nabla\mathcal D_s\\
&+\eta'(\mathcal D_s)\mathcal D_s\nabla(\nabla w\cdot E_w)\ds.
\end{align*}
Finally, we get by \eqref{CrlDEst} and \eqref{EmbIneq}
\begin{equation}\label{I4Est}
\begin{split}
\|&I_4\|_{L^p(0,T;L^q(\Omega))}\\
\leq& cK \left(\|\nabla w\|_{L^\infty(0,T;L^\infty(\Omega))}\|E_{w}\|_{L^\infty(0,T;L^\infty(\Omega))}(1+\|\mathcal D_s\|_{L^\infty(0,T;L^\infty(\Omega))})\|\nabla\mathcal D_s\|_{L^p(0,T;L^q(\Omega))}\right.\\
&\left.+\|\mathcal D_s\|_{L^\infty(0,T;L^\infty(\Omega))}(\|\nabla^2w\|_{L^p(0,T;L^q(\Omega))}\|E_w\|_{L^\infty(0,T;L^\infty(\Omega))}+\|\nabla w\|_{L^\infty(0,T;L^\infty(\Omega))}\|\nabla E_w\|_{L^p(0,T;L^q(\Omega))}\right)\\
\leq&cK\left(T^\frac{1}{p'}\|w\|^4_{\mathcal V^{p,q}(Q_T)}(1+T^\frac{1}{p'}\|w\|_{\mathcal V^{p,q}(Q_T)})\left(1+(T^\frac{1}{p'}+T)\|w\|_{\mathcal V^{p,q}(Q_T)}\right)\right.\\
&\left.+(T^\frac{1}{p'}+T)\|w\|^3_{\mathcal V^{p,q}(Q_T)}(1+T^\frac{1}{p'}\|w\|_{\mathcal V^{p,q}(Q_T)})\right)\\
\leq&cK\left(T^\frac{1}{p'}(M+L)^4\left(1+(M+L)\right)^2+(T^\frac{1}{p'}+T)(1+M+L)^3\right).
\end{split}
\end{equation}
In order to proceed with the estimate of $\mathscr{F}$ we have
\begin{equation}\label{PrDiffEst2}
\begin{split}
&\|\pi'(\varrho_*+\theta_0+\vartheta)\nabla(\theta_0+\vartheta)E_w\|_{L^p(0,T;L^q(\Omega))}\leq \Pi\|\nabla(\theta_0+\vartheta)\|_{L^p(0,T;L^q(\Omega))}\|E_{w}\|_{L^\infty(0,T;L^\infty(\Omega))}\\
&\leq c\Pi(L+M)T^\frac{1}{p'}(M+L).
\end{split}
\end{equation}
Applying \eqref{ThetaEst} and the Sobolev embedding we obtain
\begin{equation}\label{TDerEst}
\|\vartheta\tder w\|_{L^p(0,T;L^q(\Omega))}\leq \|\vartheta\|_{L^\infty(0,T;L^\infty(\Omega))}\|\tder w\|_{L^p(0,T;L^q(\Omega))}\leq c T^\frac{1}{p'}M(M+L).
\end{equation}
One immediately has
\begin{equation}\label{PrDiffEst3}
\|\pi'(\varrho_*+\theta_0)\nabla \theta_0\|_{L^p(0,T;L^q(\Omega))}\leq c T^\frac{1}{p}\Pi L.
\end{equation}
By the assumed smoothness of $\pi$, the Sobolev embedding and \eqref{ThetaEst} we get
\begin{equation}\label{PrDiffEst4}
\begin{split}
\|(\pi'(\varrho_*+\theta_0+\vartheta)-\pi'(\varrho_*+\theta_0))\nabla(\theta_0+ \vartheta)\|_{L^p(0,T;L^q(\Omega))}&\leq \Pi\|\vartheta\|_{L^\infty(0,T;L^\infty(\Omega))}\|\nabla(\theta_0+\vartheta)\|_{L^p(0,T;L^q(\Omega))}\\
&\leq c\Pi T^\frac{1}{p'}M(L+M).
\end{split}
\end{equation}
It remains to estimate the norm of the difference $\dvr \mathcal{S}(\SymGDev w)-\mathcal{A}(\mathbb D u_0)(\mathbb Dw)$. To this end, recalling \eqref{eq:DivS} and \eqref{eq:def.A} we obtain
\begin{equation}\label{LinearizDiff}
\begin{split}
\|&\dvr \mathcal{S}(\SymGDev w)-\mathcal{A}(\mathbb D u_0)(\mathbb D(w-u_0))\|_{L^p(0,T;L^q(\Omega))}\\
\leq&\|\sum_{j,k,l,m=1}^d(a_{jk}^{lm}(\mathbb D w)-a_{jk}^{lm}(\mathbb D u_0))\partial_l\partial_m (w-u_0)\|_{L^p(0,T;L^q(\Omega))}\\
&+\|\sum_{j,k,l,m=1}^da_{jk}^{lm}(\mathbb D w)\partial_l\partial_mu_0\|_{L^p(0,T;L^q(\Omega))}.
\end{split}
\end{equation}
By \eqref{eq:ACoef} and \eqref{ACoefDifEst} it follows that
\begin{align*}
&\|a_{jk}^{lm}(\mathbb D w)-a_{jk}^{lm}(\mathbb D u_0)\|_{L^\infty(0,T;L^\infty(\Omega))}
\\&\leq cK(1+\|w\|_{\mathcal V^{p,q}(Q_T)}+\|u_0\|_{W^{2,q}(\Omega)}+\|w\|^3_{\mathcal V^{p,q}(Q_T)}+\|u_0\|^3_{W^{2,q}(\Omega)})\|w-u_0\|_{\mathcal V^{p,q}(Q_T)}.
\end{align*}
We use the above estimate and \eqref{EmbIneq} in \eqref{LinearizDiff} to deduce
\begin{equation}\label{LastDiffEst}
\begin{split}
\|\dvr \mathcal{S}(\SymGDev w)-&\mathcal{A}(\mathbb Du_0)(\mathbb D(w-u_0))\|_{L^p(0,T;L^q(\Omega))}\leq c K(1+L+M+(M+L)^3)M^2\\
&+cK(1+(M+L)+(M+L)^2)LT^\frac{1}{p}.
\end{split}
\end{equation}
We summarize \eqref{RhsGEst}, \eqref{RhsGEst2}, \eqref{TrSEEst}, \eqref{I1Est}, \eqref{I2Est}, \eqref{I3Est}, \eqref{I4Est}, \eqref{TDerEst},\eqref{PrDiffEst2}, \eqref{PrDiffEst3}, \eqref{PrDiffEst4} and \eqref{LastDiffEst} to conclude
\begin{multline*}
\|\mathscr G(\vartheta,w)\|_{L^p(0,T;W^{1,q}(\Omega))}+\|\mathscr F(\vartheta,w)\|_{L^p(0,T;L^q(\Omega))}\\ 
\leq c\left(\sum_{j=1}^{N_1} K^{k_j}L^{l_j}M^{m_j}\Pi^{|k_j-1|}T^{\alpha_j}+\sum_{j=1}^{N_2} K^{n_j}L^{o_j}M^{p_j} + T\right),
\end{multline*}
where $\alpha_j >0$, $l_j,m_j, o_j, \in\eN\cup\{0\}$, $p_j\in \eN\setminus\{1\}$, $k_j,n_j\in\{0,1\}$.

The $M$ appears on the right hand side either to some nonnegative power in a product with a nonnegative power of $T$ or to the second or higher power not multiplied by a nonnegative power of $T$. As a result, for every $c>0$ we can choose $M$ and consequently also $T$ in such way that 
$$\|\mathscr G(\vartheta,w)\|_{L^p(0,T;W^{1,q}(\Omega))}+\|\mathscr F(\vartheta,w)\|_{L^p(0,T;L^q(\Omega))}\leq c M.
$$
Theorem $\ref{thm:regularita}$ then yields $\Phi:H_{T,M}\mapsto H_{T,M}$ for these appropriately chosen $T$ and $M$. \bigskip

In order to verify that $\Phi$ is a contraction on $H_{T,M}$ we fix arbitrary pairs $(\vartheta^1,w^1), (\vartheta^2,w^2)\in H_{T,M}$. Next we note that the difference $(\theta^1-\theta^2,\tilde u^1-\tilde u^2)$ of solutions corresponding to $(\vartheta^1,w^1)$, $(\vartheta^2,w^2)$ respectively, solves the equations
\begin{equation}\label{eq:LinearizedDiff}
\begin{split}
\tder (\theta^1-\theta^2)+(\varrho_*+\theta_0)\dvr(\tilde u^1-\tilde u^2)=&\mathscr{G}(\vartheta^1,w^1)-\mathscr{G}(\vartheta^2,w^2),\\
(\varrho_*+\theta_0)\tder(w^1-w^2)-\mathcal{A}(\mathbb D u_0)(\mathbb D(\tilde u^1-\tilde u^2))+\pi'(\varrho_0+\theta_0)\nabla(\theta^1-\theta^2)=&\mathscr{F}(\vartheta^1,w^1)-\mathscr{F}(\vartheta^2,w^2).
\end{split}
\end{equation}
We begin with the estimate of the difference on the right hand side of \eqref{eq:LinearizedDiff}$_1$. Employing \eqref{ThetaEst}, the Sobolev embedding and \eqref{EmbIneq} it follows that
\begin{equation}\label{RhsGDiff1}
\begin{split}
\|\vartheta^1\dvr w^1-\vartheta^2\dvr w^2\|_{L^p(0,T;W^{1,q}(\Omega))}\leq& c\|\vartheta^1-\vartheta^2\|_{L^\infty(0,T;L^\infty(\Omega))}\|w^1\|_{L^p(0,T;W^{2,q}(\Omega))}\\&+c\|\vartheta^2\|_{L^\infty(0,T;L^\infty(\Omega))}\|w^1-w^2\|_{L^p(0,T;W^{2,q}(\Omega))}\\
\leq &cT^\frac{1}{p'}(M+L)\|\vartheta^1-\vartheta^2\|_{L^{\infty}(0,T;L^{\infty}(\Omega))}\\
&+cT^\frac{1}{p'}M\|w^1-w^2\|_{L^p(0,T;W^{2,q}(\Omega))}
\end{split}
\end{equation}
as well as
\begin{equation}\label{RhsGDiff2}
\begin{split}
&\|\nabla w^1\cdot E_{w^1}(\varrho_*+\theta_0+\vartheta^1)-\nabla w^2\cdot E_{w^2}(\varrho_*+\theta_0+\vartheta^2)\|_{L^p(0,T;W^{1,q}(\Omega))}\\
&\leq \|w^1-w^2\|_{L^p(0,T;W^{2,q}(\Omega))}\|E_{w^1}\|_{L^\infty(0,T;L^\infty(\Omega))}\|\varrho_*+\theta_0+\vartheta^1\|_{L^\infty(0,T;L^\infty(\Omega))}\\
&+\|w^2\|_{L^\infty(0,T;W^{1,\infty}(\Omega))}\|E_{w^1}-E_{w^2}\|_{L^p(0,T;W^{1,q}(\Omega))}\|\varrho_*+\theta_0+\vartheta^1\|_{L^\infty(0,T;L^\infty(\Omega))}\\
&+\|w^2\|_{L^\infty(0,T;W^{1,\infty}(\Omega))}\|E_{w^2}\|_{L^\infty(0,T;L^\infty(\Omega))}\|\vartheta^1-\vartheta^2\|_{L^p(0,T;W^{1,q}(\Omega))}\\
&\leq cMT^\frac{1}{p'}\|w^1-w^2\|_{L^p(0,T;W^{2,q}(\Omega))}(L+M)+c(M+L)^2T\|w^1-w^2\|_{L^p(0,T;W^{2,q}(\Omega))}\\
&+c(M+L)MT^\frac{1}{p'}\|\vartheta^1-\vartheta^2\|_{L^p(0,T;W^{1,q}(\Omega))}
\end{split}
\end{equation}
where we applied \eqref{ELPQEst} and
\begin{equation}\label{EDiffEst}
\|E_{w^1}-E_{w^2}\|_{L^p(0,T;W^{k,q}(\Omega))}\leq cT\|w^1-w^2\|_{L^p(0,T;W^{k+1,q}(\Omega))},\ k=0,1.
\end{equation}
following from \eqref{EExpr} and the properties of the mapping $W$.

Next we focus on te estimate of the difference on the right hand side in \eqref{eq:LinearizedDiff}$_2$. By the assumed smoothness of $\pi$ and \eqref{EDiffEst} we get
\begin{equation}\label{PrDiffEst1}
\begin{split}
\|\nabla\pi(\varrho_*+\theta_0+\vartheta^1)E_{w^1}&-\nabla\pi(\varrho_*+\theta_0+\vartheta^2)E_{w^2}\|_{L^p(0,T;L^q(\Omega))}\\
&\leq \Pi\|\vartheta^1-\vartheta^2\|_{L^p(0,T;L^q(\Omega))}\|E_{w^1}\|_{L^\infty(0,T;L^\infty(\Omega))}+\Pi\|E_{w^1}-E_{w^2}\|_{L^p(0,T;L^q(\Omega))}\\
&\leq c\Pi(T^\frac{1}{p'}(M+L)+T)[\vartheta^1-\vartheta^2,w^1-w^2]_{T,M}.
\end{split}
\end{equation}
We continue with expanding the divergence to obtain
\begin{align*}
\dvr& \left(\tilde{\mathcal S}(w^1)-\tilde{\mathcal S}(w^2)-(\mathcal S(\SymGDev w^1)-\mathcal S(\SymGDev w^2))\right)\\
=&\left(\mu(|\mathcal{G}(w^1)|^2)\dvr\mathcal{G}(w^1)-\mu(|\SymGDev w^1|^2)\dvr\SymGDev w^1\right.\\
&\left.-\left(\mu(|\mathcal{G}(w^2)|^2)\dvr\mathcal{G}(w^2)-\mu(|\SymGDev w^2|^2)\dvr\SymGDev w^2\right)\right)\\
&+2\left(\mu'(|\mathcal G(w^1)|^2)|\mathcal G(w^1)|^2\dvr\mathcal{G}(w^1)-\mu'(|\SymGDev w^1|^2)|\SymGDev w^1|^2\dvr\SymGDev w^1\right.\\
&\left.-\mu'(|\mathcal G(w^2)|^2)|\mathcal G(w^2)|^2\dvr\mathcal{G}(w^2)+\mu'(|\SymGDev w^2|^2)|\SymGDev w^2|^2\dvr\SymGDev w^2\right)\\
&+\left(\eta(\mathcal D(w^1))\nabla\mathcal{D}(w^1)-\eta(\dvr w^1)\nabla\dvr w^1\right.\\
&\left.-\eta(\mathcal D(w^2))\nabla\mathcal{D}(w^2)+\eta(\dvr w^2)\nabla\dvr w^2\right)\\
&+\left(\eta'(\mathcal{D}(w^1))\mathcal D(w^1)\nabla\mathcal D(w^1)-\eta'(\dvr w^1)\dvr w^1\nabla\dvr w^1\right.\\
&-\left.\eta'(\mathcal{D}(w^2))\mathcal D(w^2)\nabla\mathcal D(w^2)+\eta'(\dvr w^2)\dvr w^2\nabla\dvr w^2\right)=\sum_{j=1}^4I_j.
\end{align*}
We estimate the norm of each $I_j$ separately. First, denoting 
\begin{equation*}
\mathcal G^j_s=s\mathcal{G}(w^j)+(1-s)\SymGDev w^j\ j=1,2
\end{equation*}
we obtain by \eqref{MuDiff}, \eqref{MuDivDiff} and \eqref{MuPrDivDiff} 
\begin{align*}
I_1=&-\int_0^1(2\mu'(|\mathcal{G}^1_s|^2)-2\mu'(|\mathcal{G}^2_s|^2))\mathcal{G}^1_s\cdot\sym(\nabla w^1E_{w_1})^D\dvr\mathcal G^1_s\\
&+2\mu'(|\mathcal G^2_s|^2)(\mathcal{G}^1_s-\mathcal G^2_s)\cdot\sym(\nabla w^1E_{w_1})^D\dvr\mathcal G^1_s\\
&+2\mu'(|\mathcal G^2_s|^2)\mathcal{G}^2_s\cdot\sym(\nabla(w^1-w^2)E_{w^1})^D\dvr\mathcal G^1_s\\
&+2\mu'(|\mathcal G^2_s|^2)\mathcal{G}^2_s\cdot\sym(\nabla w^2(E_{w^1}-E_{w^2}))^D\dvr\mathcal G^1_s\\
&+2\mu'(|\mathcal G^2_s|^2)\mathcal{G}^2_s\cdot\sym(\nabla w^2E_{w^2})^D\dvr(\mathcal G^1_s-\mathcal{G}^2_s)\\
&+(\mu(|\mathcal{G}^1_s|^2)-\mu(|\mathcal{G}^2_s|^2))\dvr\sym(\nabla w^1E_{w^1})^D\\
&+\mu(|\mathcal{G}^2_s|^2)\dvr\sym(\nabla (w^1-w^2)E_{w^1})^D\\
&+\mu(|\mathcal{G}^2_s|^2)\dvr\sym(\nabla w^2(E_{w^1}-E_{w^2}))^D\ds
\end{align*} 
and
\begin{align*}
I_2=&-\int_0^1 2\left((\mu''(|\mathcal G^1_s|^2)-\mu''(|\mathcal G^2_s|^2))|\mathcal G^1_s|^2+\mu'(|\mathcal G^1_s|^2)-\mu'(|\mathcal G^2_s|^2)\right)\\
&\mathcal G^1_s\cdot\sym(\nabla w^1E_{w^1})^D\dvr\mathcal G^1_s+2\mu''(|\mathcal G^2_s|^2)(|\mathcal G^1_s|^2-|\mathcal G^2_s|^2)\mathcal G^1_s\cdot\sym(\nabla w^1E_{w^1})^D\dvr\mathcal G^1_s\\
&+2\left(\mu''(|\mathcal G^2_s|^2)|\mathcal G^2_s|^2+\mu'(|\mathcal G^2_s|^2)\right)\left(\mathcal G^1_s-\mathcal G^2_s\right)\cdot\sym(\nabla w^1E_{w^1})^D\dvr\mathcal G^1_s\\
&+2\left(\mu''(|\mathcal G^2_s|^2)|\mathcal G^2_s|^2+\mu'(|\mathcal G^2_s|^2)\right)\mathcal G^2_s\cdot\sym(\nabla(w^1-w^2)E_{w^1})^D\dvr\mathcal G^1_s\\
&+2\left(\mu''(|\mathcal G^2_s|^2)|\mathcal G^2_s|^2+\mu'(|\mathcal G^2_s|^2)\right)\mathcal G^2_s\cdot\sym(\nabla w^2(E_{w^1}-E_{w^2}))^D\dvr\mathcal G^1_s\\
&+2\left(\mu''(|\mathcal G^2_s|^2)|\mathcal G^2_s|^2+\mu'(|\mathcal G^2_s|^2)\right)\mathcal G^2_s\cdot\sym(\nabla w^2E_{w^2})^D\dvr(\mathcal G^1_s-\mathcal G^2_s)\\
&+(\mu(|\mathcal G^1_s|^2)-\mu(|\mathcal G^2_s|^2))|\mathcal G^1_s|^2\dvr\sym(\nabla w^1E_{w^1})^D\\
&+\mu(|\mathcal G^2_s|^2)(|\mathcal G^1_s|^2-|\mathcal G^2_s|^2)\dvr\sym(\nabla w^1E_{w^1})^D\\
&+\mu(|\mathcal G^2_s|^2)|\mathcal G^2_s|^2\dvr\sym(\nabla (w^1-w^2)E_{w^1})^D\\
&+\mu(|\mathcal G^2_s|^2)|\mathcal G^2_s|^2\dvr\sym(\nabla w^2(E_{w^1}-E_{w^2}))^D\ds.
\end{align*}
Then denoting
\begin{equation*}
\mathcal D^j_s=s\mathcal D(w^j)+(1-s)\dvr w^j\ j=1,2
\end{equation*}
we obtain using \eqref{LambdaNabDiff1}
\begin{align*}
I_3=&-\int_0^1 \left(\eta'(\mathcal D^1_s)-\eta'(\mathcal D^2_s)\right)\nabla w^1\cdot E_{w^1}\nabla\mathcal D^1_s+\eta'(\mathcal D^2_s)\nabla (w^1-w^2)\cdot E_{w^1}\nabla\mathcal D^1_s\\
&+\eta'(\mathcal D^2_s)\nabla w^2\cdot (E_{w^1}-E_{w^2})\nabla\mathcal D^1_s+\eta'(\mathcal D^2_s)\nabla w^2\cdot E_{w^2}\nabla(\mathcal D^1_s-\mathcal D^2_s)\\
&+(\eta(\mathcal D^1_s)-\eta(\mathcal D^2_s))\nabla(\nabla w^1\cdot E_{w^1})+\eta(\mathcal D_s^2)\nabla(\nabla(w^1-w^2)\cdot E_{w^1})
\\
&+\eta(\mathcal D_s^2)\nabla(\nabla w^2\cdot (E_{w^1}-E_{w^2}))\ds
\end{align*}
and
\begin{align*}
I_4=&-\int_0^1 \left(\eta''(\mathcal D^1_s)-\eta''(\mathcal D^2_s)\right)\nabla w^1\cdot E_{w^1}\mathcal D^1_s\nabla\mathcal D^1_s+\eta''(\mathcal D^2_s)\nabla(w^1-w^2)\cdot E_{w^1}\mathcal D^1_s\nabla\mathcal D^1_s\\
&+\eta''(\mathcal D^2_s)\nabla w^2\cdot (E_{w^1}-E_{w^2})\mathcal D^1_s\nabla\mathcal D^1_s+\eta''(\mathcal D^2_s)\nabla w^2\cdot E_{w^2}(\mathcal D^1_s-\mathcal D^2_s)\nabla\mathcal D^1_s\\
&+\eta''(\mathcal D^2_s)\nabla w^2\cdot E_{w^2}\mathcal D^2_s\nabla(\mathcal D^1_s-\mathcal D^2_s)+\left(\eta'(\mathcal D^1_s)-\eta'(\mathcal D^2_s)\right)\mathcal D^1_s\nabla(\nabla w^1\cdot E_{w^1})\\
&+\eta'(\mathcal D^2_s)\left(\mathcal D^1_s-\mathcal D^2_s\right)\nabla(\nabla w^1\cdot E_{w^1})+\eta'(\mathcal D^2_s)\mathcal D^2_s\nabla(\nabla(w^1-w^2)\cdot E_{w^1})\\
&+\eta'(\mathcal D^2_s)\mathcal D^2_s\nabla(\nabla w^2\cdot (E_{w^1}-E_{w^2}))\ds.
\end{align*}

We also have the estimates
\begin{align*}
\|\mathcal G^1_s&-\mathcal G^2_s\|_{L^\infty(0,T;L^\infty(\Omega))}\\
\leq&
\|\SymGDev(w^1-w^2)\|_{L^\infty(0,T;L^\infty(\Omega))}+\|\nabla(w^1-w^2)\|_{L^\infty(0,T;L^\infty(\Omega))}\|E_{w^1}\|_{L^\infty(0,T;L^\infty(\Omega))}\\
&+\|\nabla w^2\|_{L^\infty(0,T;L^\infty(\Omega))}\|(E_{w^1}-E_{w^2})\|_{L^\infty(0,T;L^\infty(\Omega))}\\
\leq&c\|w^1-w^2\|_{\mathcal V^{p,q}(Q_T)}(1+T^\frac{1}{p'}(\|w^1\|_{L^p(0,T;W^{2,q}(\Omega))}+\|w^2\|_{\mathcal V^{p,q}(Q_T)}))
\end{align*}
and
\begin{align*}
\|\dvr&(\mathcal G^1_s-\mathcal G^2_s)\|_{L^p(0,T;L^q(\Omega))}\\
\leq& c\|\nabla^2w^1-w^2\|_{L^p(0,T;L^q(\Omega))}(1+\|E_{w^2}\|_{L^\infty(0,T;L^\infty(\Omega))})\\
&+c\|\nabla(w^1-w^2)\|_{L^\infty(0,T;L^\infty(\Omega))}\|\nabla E_{w^2}\|_{L^p(0,T;L^q(\Omega))}\\
&+\|\nabla^2w^1\|_{L^p(0,T;L^q(\Omega))}\|E_{w^2-w^1}\|_{L^\infty(0,T;L^\infty(\Omega))}\\
&+\|\nabla w^1\|_{L^\infty(0,T;L^\infty(\Omega))}\|\nabla E_{w^2-w^1}\|_{L^p(0,T;L^q(\Omega))}\\
\leq&c\|w^1-w^2\|_{\mathcal V^{p,q}(Q_T)}(1+(T^\frac{1}{p'}+T)(\|w^1\|_{\mathcal V^{p,q}(Q_T)}+\|w^2\|_{\mathcal{V}^{p,q}(Q_T)}))
\end{align*}
by \eqref{EEst} and \eqref{EmbIneq}. By the previous two estimates along with \eqref{GSEst}, \eqref{DivGSEst}, \eqref{DivSymEst}, \eqref{DSEst} and \eqref{NabDSEst} in straightforward manipulations similar to the ones in the boundedness proof we infer that
\begin{equation}\label{AuxSum}
\sum_{j=1}^4\|I_j\|_{L^p(0,T;L^q(\Omega))}\leq \|w^1-w^2\|_{\mathcal V^{p,q}(Q_T)}cK\sum_{j=1}^{\tilde N} M^{p_j}L^{r_j}T^{\kappa_j},\ p_j,r_j\in\eN\cup \{0\},\kappa_j>0.
\end{equation}
We omit the detailed presentation of the proof of the latter inequality in order to keep the length of the paper in a reasonable limit. By very similar manipulations we obtain for the last term of the difference in \eqref{eq:LinearizedDiff}$_2$ that
\begin{equation}\label{TrSEDiffEst}
\begin{split}
\|&\tr(\nabla \tilde{\mathcal S}(w^1)E_{w^1})-\tr(\nabla\tilde{\mathcal S}(w^2)E_{w^2})\|_{L^p(0,T;L^q(\Omega))}\\
&\leq \|w^1-w^2\|_{\mathcal V^{p,q}(Q_T)}cK\sum_{j=1}^{\tilde P}M^{p_j}L^{r_j}T^{\kappa_j},\ p_j,r_j\in\eN\cup \{0\},\kappa_j>0.
\end{split}
\end{equation}

We infer in a straightforward way
\begin{equation}\label{TDerDiffEst}
\begin{split}
\|\vartheta^1\tder w^1-\vartheta^2 \tder w^2\|_{L^p(0,T;L^q(\Omega))}\leq& \|\vartheta^1-\vartheta^2\|_{L^\infty(0,T;L^\infty(\Omega))}\|\tder w^1\|_{L^p(0,T;L^q(\Omega))}\\&+\|\vartheta^2\|_{L^\infty(0,T;L^\infty(\Omega))}\|\tder w^1-\tder w^2\|_{L^p(0,T;L^q(\Omega))}\\
\leq &cT^\frac{1}{p'}M\left(\|\vartheta^1-\vartheta^2\|_{W^{1,p}(0,T;W^{1,q}(\Omega))}+\|\tder (w^1-w^2)\|_{L^p(0,T;L^q(\Omega))}\right)
\end{split}
\end{equation}
and
\begin{equation}\label{PrDiff3Est}
\begin{split}
&\|(\pi'(\varrho_*+\theta_0+\vartheta^1)-\pi'(\varrho_*+\theta_0))\nabla(\theta_0+\vartheta^1)-(\pi'(\varrho_*+\theta_0+\vartheta^2)-\pi'(\varrho_*+\theta_0))\nabla(\theta_0+\vartheta^2)\|_{L^p(0,T;L^q(\Omega))}\\
&\leq \Pi\|\vartheta^1-\vartheta^2\|_{L^\infty(0,T;L^\infty(\Omega))}\|\theta_0+\vartheta^1\|_{L^p(0,T;W^{1,q}(\Omega))}+2\Pi\|\vartheta^1-\vartheta^2\|_{L^p(0,T;W^{1,q}(\Omega))}
\\
&\leq c\Pi T^\frac{1}{p'}(L+M+1)\|\vartheta^1-\vartheta^2\|_{W^{1,p}(0,T;W^{1,q}(\Omega))}.
\end{split}
\end{equation}

Applying \eqref{ACoefDifEst} and \eqref{EmbIneq} we have
\begin{equation}\label{LTermDiffEst}
\begin{split}
\|&\dvr(\mathcal S(\SymGDev w^1)-\mathcal S(\SymGDev w^2))-\mathcal A(\mathbb Du_0)(\mathbb D(w^1-w^2))\|_{L^p(0,T;L^q(\Omega))}\\
=&\|\mathcal{A}(\mathbb Dw^1)(\mathbb Dw^1)-\mathcal A(\mathbb Dw^2)(\mathbb Dw^2)-\mathcal A(\mathbb Du_0)(\mathbb D(w^1-w^2))\|_{L^p(0,T;L^q(\Omega))}\\
\leq& \|\mathcal{A}(\mathbb Dw^1)(\mathbb D(w^1-w^2))-\mathcal A(\mathbb Du_0)(\mathbb D(w^1-w^2))\|_{L^p(0,T;L^q(\Omega))}\\
&+\|\mathcal{A}(\mathbb Dw^1)(\mathbb Dw^2)-\mathcal A(\mathbb Dw^2)(\mathbb Dw^2) \|_{L^p(0,T;L^q(\Omega))}\\
\leq&\sum_{j,k,l,m=1}^d\left(\|a_{jk}^{lm}(\mathbb Dw^1)-a_{jk}^{lm}(\mathbb Du_0)\|_{L^\infty(0,T;L^\infty(\Omega))}\|\partial_k\partial_l(w^1-w^2)\|_{L^p(0,T;L^q(\Omega))}\right.\\
&\left.+\|a_{jk}^{lm}(\mathbb Dw^1)-a_{jk}^{lm}(\mathbb Dw^2)\|_{L^\infty(0,T;L^\infty(\Omega))}\|\partial_k\partial_l w^2\|_{L^p(0,T;L^q(\Omega))}\right)\\
\leq &cK(1+\|w^1\|_{L^\infty(0,T;W^{1,\infty}(\Omega))}+\|u_0\|_{W^{1,\infty}(\Omega)}+\|w^1\|^3_{L^\infty(0,T;W^{1,\infty}(\Omega))}+\|u_0\|^3_{W^{1,\infty}(\Omega)})\\
&\times\|w^1-u_0\|_{L^\infty(0,T;W^{1,\infty}(\Omega))}\|w^1-w^2\|_{L^p(0,T;W^{2,q}(\Omega))}\\
&+cK(1+\|w^1\|_{L^\infty(0,T;W^{1,\infty}(\Omega))}+\|w^2\|_{L^\infty(0,T;W^{1,\infty}(\Omega))}+\|w^1\|^3_{L^\infty(0,T;W^{1,\infty}(\Omega))}+\|w^2\|^3_{L^\infty(0,T;W^{1,\infty}(\Omega))})\\
&\times\|w^1-w^2\|_{L^\infty(0,T;W^{1,\infty}(\Omega))}\|w^2\|_{L^p(0,T;W^{2,q}(\Omega))}\\
\leq& cK(1+(M+L)+L+(M+L)^3+L^3)M\|w^1-w^2\|_{\mathcal V^{p,q}(Q_T)}\\
&+cK(M+L)(1+(M+L)+(M+L)^3)\|w^1-w^2\|_{\mathcal V^{p,q}(Q_T)}.
\end{split}
\end{equation}

Taking into account \eqref{RhsGDiff1}, \eqref{RhsGDiff2}, \eqref{PrDiffEst1}, \eqref{TrSEDiffEst}, \eqref{AuxSum}, \eqref{TDerDiffEst}, 
 \eqref{PrDiff3Est} and \eqref{LTermDiffEst} we conclude
\begin{equation*}
\begin{split}
&\|\mathscr G(\vartheta^1,w^1)-\mathscr G(\vartheta^2,w^2)\|_{L^p(0,T;W^{1,q}(\Omega))}+\|\mathscr F(\vartheta^1,w^1)-\mathscr F(\vartheta^2,w^2)\|_{L^p(0,T;L^q(\Omega))}\\
&\leq [\vartheta^1-\vartheta^2,w^1-w^2]_{T,M}c\left(\sum_{j=1}^{Q_1} K^{k_j}L^{l_j}M^{m_j}\Pi^{|k_j-1|}T^{\beta_j}+\sum_{j=1}^{Q_1} K^{n_j}L^{o_j}M^{p_j}\right),
\end{split}
\end{equation*}
where $l_j,m_j,o_j\in\eN\cup\{0\}$, $\beta_j>0$, $p_j\in\eN\setminus\{1\}$ and $k_j,n_j\in\{0,1\}$. 

As before, $T$ and $M$ can be chosen such that the sum in the bracket on the right hand side is less than $c$ for arbitrary $c>0$. Consequently, $T$ and $M$ can be chosen such that $\Phi$ is a contraction according to Theorem \ref{thm:regularita} and the Banach fix-point theorem then yields the validity of Theorem \ref{thm.main}. 

It only remains to prove Theorem \ref{thm:regularita}. This is content of the next section.



\section{\texorpdfstring{$L^p$}{someref} regularity of the linearized system}
\label{Lpreg}
The main aim of this  section is to prove the following regularity theorem for \eqref{eq:linearized}. For simplicity, we denote $u = \tilde u - u_0$ throughout this section. 

\begin{Theorem}\label{thm:regularita} Let $\theta_0,\ \nabla u_0\in BUC(\Omega)$\footnote{Here $BUC$ denotes the space of bounded and uniformly continuous functions. Note that $W^{1,q}(\Omega)\hookrightarrow BUC(\Omega)$ whenever $q>d$.} and let $\varrho_*+\theta_0\in L^\infty(\Omega)$ and $(\varrho_* + \theta_0)^{-1}\in L^\infty(\Omega)$. Then for every $(\mathscr G, \mathscr F)\in L^p(0,T;W^{1,q}(\Omega))\times L^p(0,T;L^q(\Omega))$ the solution $(\theta, u)$ to \eqref{eq:linearized} satisfies
	\begin{equation*}
	\|  u\|_{\mathcal V^{p,q}(Q_T)} + \|\theta\|_{W^{1,p}(0,T;W^{1,q}(\Omega))}
	\leq c\left(\|\mathscr G\|_{L^p(0,T;W^{1,q}(\Omega))} + \|\mathscr F\|_{L^p(0,T;L^q(\Omega))}\right)
	\end{equation*}
	where $c$ is a constant independent of $\theta,\  u$ and the right hand side.
\end{Theorem}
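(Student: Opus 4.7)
The plan is to deduce maximal $L^p$-regularity for the linearized system via operator-valued Fourier multipliers. Writing $U=(\theta,u)^\top$, system \eqref{eq:linearized} has the abstract form $\pat U + \mathcal B U = (\mathscr G,\mathscr F)$ with the block operator
\[
\mathcal B(\theta,u)=\bigl((\varrho_*+\theta_0)\dvr u,\ -(\varrho_*+\theta_0)^{-1}\mathcal A(\mathbb D u_0)(\mathbb D u)+(\varrho_*+\theta_0)^{-1}\pi'(\varrho_*+\theta_0)\nabla\theta\bigr),
\]
acting on the base space $X_q:=W^{1,q}(\Omega)\times L^q(\Omega)$ with domain $D(\mathcal B)=W^{1,q}(\Omega)\times W^{2,q}(\Omega)$. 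By the operator-valued multiplier theorem of Weis, it suffices to establish $\mathcal R$-sectoriality of $\mathcal B$ on $X_q$, i.e.\ the existence of $\lambda_0>0$ and $\varepsilon\in(0,\pi/2)$ such that the family $\{\lambda(\lambda I+\mathcal B)^{-1}:\lambda\in\Sigma_{\pi-\varepsilon},\ |\lambda|\geq\lambda_0\}$ is $\mathcal R$-bounded on $X_q$. The theorem then follows by rescaling (absorbing $\lambda_0$ into a shift) together with the trivial estimate for the density from the transport-type first equation.

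First I would treat the constant-coefficient problem, obtained by freezing $\theta_0$ and $\nabla u_0$ at a point $x_\star\in\Omega$. On the torus, the frozen resolvent problem can be inverted via Fourier series in $\xi\in\eZ^d$, giving explicit symbols $M(\lambda,\xi)$. The ellipticity condition \eqref{StrongEll} (applied at the frozen value $\mathbb D u_0(x_\star)$) combined with the positivity of $\pi'(\varrho_*+\theta_0(x_\star))$ yields pointwise resolvent bounds of the correct order; the $\mathcal R$-boundedness of $\{\lambda M(\lambda,\xi)\}$ is then obtained by verifying the $\mathcal R$-version of the Mikhlin-type conditions on the symbol in the manner of Shibata--Enomoto \cite{EnSh}. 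A useful reduction is to decompose $u$ into solenoidal and potential parts: the solenoidal component decouples from $\theta$ and is controlled by a standard Stokes-type resolvent estimate with the non-Newtonian principal part $a^{lm}_{jk}(\mathbb D u_0(x_\star))$ replacing the Lamé operator, while the potential component couples with $\theta$ through a $2\times 2$ symbol whose $\mathcal R$-boundedness is checked directly from its explicit form.

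Next I would pass from constant to variable coefficients by a localization and perturbation argument. For $\delta>0$ small (chosen below), cover $\Omega$ by finitely many balls $B_r(x_i)$ on which the oscillations of $\theta_0$ and $\nabla u_0$ are less than $\delta$ (possible since $\theta_0,\nabla u_0\in BUC(\Omega)$), and fix an associated partition of unity $\{\chi_i\}$. In each ball, write the coefficients of $\mathcal B$ as a frozen part (at $x_i$) plus a perturbation of $L^\infty$-norm $\leq c\delta$. The frozen part is $\mathcal R$-sectorial by the previous step, and, since $\mathcal R$-sectoriality is stable under small lower-order perturbations of the resolvent, the localized operator $\chi_i\mathcal B$ remains $\mathcal R$-sectorial on $X_q$. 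Summing the local estimates through the partition of unity produces commutator remainders that are of lower differential order and may be absorbed into the left-hand side for $|\lambda|$ large, completing the proof of global $\mathcal R$-sectoriality on the torus.

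The principal obstacle is the second step: the mixed parabolic--hyperbolic structure of \eqref{eq:linearized} means that $\mathcal B$ is not sectorial in the usual parabolic sense, and the symbol analysis for the $2\times 2$ coupled potential/density block requires careful tracking of the various combinations $\lambda+|\xi|^2$, $\lambda$, $|\xi|$ that appear in its entries. The non-Newtonian coefficient tensor $a^{lm}_{jk}(\mathbb D u_0(x_\star))$ is more general than the Newtonian Lamé operator, so the resolvent bounds cannot be quoted from \cite{EnSh} verbatim, but must be redone using only the ellipticity \eqref{StrongEll} and the symmetries \eqref{eq:a.symmetry}; fortunately these are exactly what is needed to run the Fourier-multiplier arguments in that paper, so the adaptation is expected to be routine albeit technical.
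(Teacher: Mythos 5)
Your overall strategy---abstract block formulation, $\mathcal R$-sectoriality via Weis, constant-coefficient model problem, then localization via partition of unity---coincides with the paper's. However, the central device you propose for the constant-coefficient step contains a genuine gap. You want to decompose $u$ into solenoidal and potential parts and claim that the solenoidal component decouples from $\theta$, reducing the coupled part to a $2\times 2$ scalar symbol. This works for the Newtonian Lam\'e operator, whose symbol $-\mu|\xi|^2\IdM-(\mu+\lambda)\xi\xi^\top$ commutes with the Leray projector $P(\xi)=\IdM-\xi\xi^\top/|\xi|^2$. But the frozen non-Newtonian symbol contains the anisotropic rank-one piece $4\mu'(|D|^2)(D\xi)\otimes(D\xi)$ with $D=\SymGDev u_0(x_\star)$ (cf.\ \eqref{eq:ACoef}), and $D\xi$ is in general neither parallel nor orthogonal to $\xi$, so this symbol does \emph{not} commute with $P(\xi)$. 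Consequently the momentum equation mixes the solenoidal and potential parts of $u$, the advertised decoupling fails, and you are left with a genuinely coupled $(d+1)\times(d+1)$ symbol, not a Stokes-type block plus a $2\times 2$ block.

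The paper's proof sidesteps this by a different algebraic reduction: because $\lambda\neq 0$ in the resolvent problem, the mass equation is solved for $\theta$, namely $\theta=\lambda^{-1}(\mathscr G-(\varrho_*+\theta_0)\dvr u)$, and substituted into the momentum equation. This yields a single $\lambda$-parametrized elliptic problem \eqref{eq:hvezdicka} for $u$ alone, with the coupling term appearing as $\pi'(\varrho_*+\theta_0)\lambda^{-1}\nabla\dvr u$; for $|\lambda|$ large in the sector this is a small perturbation of the (strongly elliptic, symmetric) principal part, whose full anisotropic symbol $E(\xi)$ is then handled by orthogonal diagonalization---no Helmholtz split is needed. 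If you wish to pursue your route, you would have to either treat the full coupled $(d+1)\times(d+1)$ symbol directly (which is doable but not the clean reduction you describe), or adopt the paper's elimination of $\theta$; the latter is the key missing idea. A minor further difference is that the paper does the constant-coefficient symbol analysis on $\eR^d$ (via continuous Fourier transform) and only then localizes on the torus, whereas you propose Fourier series; that is an acceptable variant provided a periodic version of the $\mathcal R$-Mikhlin theorem is invoked.
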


In order to reach this goal we use a semi-group approach which may be found in e.g. \cite{DeHiPr} or in \cite{EnSh}. In particular, the equation \eqref{eq:linearized} can be rewritten as
\begin{equation}\label{eq:A.operator}
\partial_t \left(\begin{matrix} \theta\\  u\end{matrix}\right)  - \mathscr A \left(\begin{matrix}\theta \\ u \end{matrix}\right)  = \left(\begin{matrix} \mathscr G\\ \mathscr F\end{matrix}\right)
\end{equation}
for $\mathscr A$ defined as
$$
\mathscr A\left(\begin{matrix}\theta\\  u\end{matrix}\right) = \left(\begin{matrix} - (\varrho_* + \theta_0)\dvr  u \\ \frac 1{\varrho_*+\theta_0}\mathcal A(\mathbb D u_0)(\mathbb D u) - \frac 1 {\varrho_* + \theta_0}  \pi'(\varrho_* + \theta_0)\nabla \theta\end{matrix}.\right)$$
The equation \eqref{eq:A.operator} is equipped with zero initial conditions, i.e. $\theta(0,\cdot) \equiv 0$ and $u(0,\cdot) \equiv 0$. 
Note that here we write $\mathscr F$ instead of $\frac 1{\varrho_*+\theta_0}\mathscr F$. We can afford this inaccuracy since $\frac 1{\varrho_* + \theta_0}$ and $\varrho_* + \theta_0$  are bounded in $L^\infty$ in all applications and thus
\begin{equation*}
\frac1{\varrho_* + \theta_0}\mathscr F \in L^p(0,T;L^q(\Omega)) \Leftrightarrow \mathscr F \in L^p(0,T;L^q(\Omega)).
\end{equation*}

The resolvent problem for such equation is
\begin{equation}\label{eq:lin.operator}
\lambda\left(\begin{matrix} \theta\\  u\end{matrix}\right) - \mathscr A \left(\begin{matrix}\theta\\  u\end{matrix}\right) = \left(\begin{matrix}\mathscr G\\\mathscr F\end{matrix}\right).
\end{equation}
First, we recall the definition of $\mathcal R-$boundedness (see \cite[Definition 2.1]{EnSh}):

\begin{Definition}
	A family $\mathcal T\subset \mathcal L(X,Y)$ (i.e. a family of linear operators from $X$ to $Y$) is called $\mathcal R-$bounded if there exist constants $C>0$ and $p\in [1,\infty)$ such that for each $n\in \mathbb N$, $T_j\in \mathcal T$, $f_j\in X$ ($j\in\{1,\ldots, n\}$) and for all sequences $\{r_j(v)\}_{j=1}^n$ of independent, symmetric, $\{-1,1\}$-valued random variables on $[0,1]$ there holds 
	$$
	\int_0^1 \left\|\sum_{j=1}^n r_j(v) T_j f_j\right\|_Y^p\ {\rm d}v \leq C\int_0^1 \left\|\sum_{j=1}^n r_j(v) f_j \right\|_X^p\ {\rm d}v.
	$$
	The smallest such $C$ is called $\mathcal R-$bound and it is denoted by $\mathcal R_{\mathcal L(X,Y)} (\mathcal T)$.
\end{Definition}
We aim to prove the $\mathcal R$--sectoriality of the operator $\mathscr A$, i.e., to justify that $\mathscr R_\lambda = \lambda(\lambda \mathbb I_d - \mathscr A)^{-1}$  is $\mathcal R-$bounded for all $\lambda\in \Sigma_{\beta,\nu}$ for some $\beta\in (0,\frac \pi 2)$ and $\nu>0$. Here
$$
\Sigma_{\beta,\nu} = \{z\in \mathbb C,\ |{\rm arg}(z-\nu)|\leq \pi-\beta\}.
$$

In the last part of this section we show that this $\mathcal R-$boundedness is sufficient for the $L^p-L^q$ regularity. To reach this goal, we use the following theorem which is due to Weis.
\begin{Theorem} Let $X$ and $Y$ be two $UMD$ spaces and $p\in (1,\infty)$. Let $M$ be a function in $C^{1}(\mathbb R\setminus \{0\},\mathcal L(X,Y))$ such that\label{thm.weis}
	\begin{equation*}
	\begin{split}
	&\mathcal R_{\mathcal L(X,Y)} \left(\{M(\tau),\ \tau \in \mathbb R\setminus \{0\}\}\right) =\kappa_0<\infty\\
	& \mathcal R_{\mathcal L(X,Y)}\left(\{\tau M'(\tau),\ \tau\in \mathbb R\setminus\{0\} \}\right) = \kappa_1<\infty.
	\end{split}
	\end{equation*}
	Then $T_M$ defined as $T_M\Phi = \mathcal F^{-1}[M\mathcal F(\Phi)]$ is extended to a bounded linear operator from $L^p(\mathbb R,X)$ into $L^p(\mathbb R,Y)$. Moreover,
	\begin{equation*}
	\|T_M\|_{\mathcal L(X,Y)} \leq c (\kappa_0  + \kappa_1)
	\end{equation*}
	where $c>0$ is independent of $p,\ X,\ Y,\ \kappa_0$ and $\kappa_1$.
\end{Theorem}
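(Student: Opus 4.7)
The plan is to recast \eqref{eq:linearized} as the abstract Cauchy problem $\tder U - \mathscr A U = (\mathscr G, \mathscr F)$ with $U = (\theta, u)$ on the base space $X := W^{1,q}(\Omega) \times L^q(\Omega)^d$, and to deduce the claimed maximal regularity estimate from the $\mathcal R$-sectoriality of $\mathscr A$ together with the Weis multiplier theorem (Theorem \ref{thm.weis}). Both $W^{1,q}(\Omega)$ and $L^q(\Omega)^d$ are UMD spaces for $q \in (1,\infty)$, so by the well-established reduction (see e.g. \cite{DeHiPr}), once the operator family $\{\lambda(\lambda \mathbb I_d - \mathscr A)^{-1} : \lambda \in \Sigma_{\beta,\nu_0}\}$ is shown to be $\mathcal R$-bounded on $X$ for some $\beta \in (0,\pi/2)$ and $\nu_0>0$, applying Theorem \ref{thm.weis} to the symbol $M(\tau) = i\tau(i\tau \mathbb I_d - \mathscr A)^{-1}$ on $\mathbb R$ yields $\tder U \in L^p(\mathbb R; X)$. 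An extension-by-zero argument, legitimate because $(\theta, u)$ vanishes at $t=0$, then transfers the estimate to $(0,T)$; the bound on the full second-order norm of $u$ follows by reading off $\mathscr A u$ from the equation.

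Accordingly the bulk of the work is the $\mathcal R$-sectoriality of $\mathscr A$, which I would prove by the classical three-step localization-perturbation scheme. In Step 1, I freeze the coefficients $\varrho_* + \theta_0$, $a^{lm}_{jk}(\mathbb D u_0)$, and $\pi'(\varrho_* + \theta_0)$ at a single point $x_0 \in \Omega$, Fourier-expand on the torus, and invert the resulting $(d+1) \times (d+1)$ symbol matrix $\widehat{\mathcal M}(\lambda,\xi)$ explicitly. The strong ellipticity of $\mathcal A(\mathbb D u_0)$ established in \eqref{StrongEll}--\eqref{AuxStepEl2} guarantees that the spectrum of the principal part $A(\xi) := (a^{lm}_{jk}(\mathbb D u_0(x_0)) \xi_m \xi_l)_{jk}$ lies in a sector strictly to the right of $0$, and a direct computation of the determinant of the full symbol matrix shows that for $\lambda \in \Sigma_{\beta, \nu_0}$ with $\nu_0$ large it stays bounded below by a positive multiple of $(|\lambda| + |\xi|^2)(|\lambda|^2 + |\xi|^2)$. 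Mikhlin-type estimates on the resulting homogeneous rational entries, combined with Kahane's inequality on the torus, then yield $\mathcal R$-boundedness of $\lambda \widehat{\mathcal M}(\lambda,\xi)$ and of $\tau \partial_\tau(\lambda \widehat{\mathcal M}(\lambda,\xi))$ on $X$. In Step 2, the hypothesis $\theta_0, \nabla u_0 \in BUC(\Omega)$ permits a finite partition of unity whose coefficients have arbitrarily small oscillation on their supports, so the variable-coefficient resolvent can be written as the sum of the frozen-coefficient resolvent plus a remainder of $\mathcal R$-norm at most $O(\varepsilon) + O(\nu_0^{-1/2})$; a Neumann series inversion then transfers $\mathcal R$-sectoriality from the model problem to $\mathscr A$ itself.

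The main obstacle is the symbol analysis in Step 1, because the non-Newtonian tensor $a^{lm}_{jk}(\mathbb D u_0)$ in \eqref{eq:ACoef} makes $A(\xi)$ genuinely anisotropic rather than of Lam\'e form, so the customary decomposition of the velocity field into compressible and incompressible modes is no longer free. One has to verify that the spectral gap provided by \eqref{MuCond1}, \eqref{MuCond2}, and \eqref{LambdaCond} still separates the single ``acoustic'' eigenvalue of the coupled symbol matrix, which grows like $|\xi|$ and matches the lower $W^{1,p}(0,T; W^{1,q}(\Omega))$ regularity of $\theta$, from the $d$ ``viscous'' eigenvalues, which grow like $|\xi|^2$ and match the $\mathcal V^{p,q}(Q_T)$ regularity of $u$, uniformly in the unit direction $\xi/|\xi|$ and in the frozen value of $\mathbb D u_0(x_0)$. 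Once this uniform spectral separation is in place, the rest of the proof follows the blueprint of Enomoto-Shibata \cite{EnSh}, and the stated estimate is a direct consequence.
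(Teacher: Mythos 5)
Your proposal does not prove the statement in question. The statement is the abstract operator-valued Fourier multiplier theorem of Weis: given two UMD spaces $X,Y$ and a symbol $M\in C^1(\mathbb R\setminus\{0\},\mathcal L(X,Y))$ whose range and the range of $\tau M'(\tau)$ are $\mathcal R$-bounded, the operator $T_M\Phi=\mathcal F^{-1}[M\mathcal F(\Phi)]$ extends boundedly from $L^p(\mathbb R,X)$ to $L^p(\mathbb R,Y)$. What you have written is instead a proof sketch of the maximal-regularity result for the linearized system (Theorem \ref{thm:regularita}): you recast \eqref{eq:linearized} as a Cauchy problem, establish $\mathcal R$-sectoriality of $\mathscr A$ by freezing coefficients and a localization--perturbation scheme, and then \emph{invoke} the Weis theorem as a black box. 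In other words, your argument uses the statement it is supposed to prove as one of its main ingredients, so as a proof of Theorem \ref{thm.weis} it is circular and addresses the wrong target. (For what it is worth, the paper does not prove this theorem either --- it quotes it from Weis's work --- so there is no internal proof to compare against; but a correct attempt would have to be an argument in abstract vector-valued harmonic analysis, not an argument about the compressible non-Newtonian system.)

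A genuine proof of the stated theorem requires entirely different tools: a dyadic (Littlewood--Paley) decomposition of the Fourier variable, the randomized Littlewood--Paley inequality valid in UMD spaces (which is where the UMD hypothesis and the boundedness of the vector-valued Hilbert transform enter), Kahane's contraction principle, and the representation of $M$ on each dyadic block via $M(\tau)=M(\tau_j)+\int_{\tau_j}^{\tau}M'(s)\,\diff s$ so that the hypotheses $\mathcal R(\{M(\tau)\})=\kappa_0$ and $\mathcal R(\{\tau M'(\tau)\})=\kappa_1$ control the Rademacher averages of the block operators. None of these ingredients appear in your proposal. Separately, even read as a proof of Theorem \ref{thm:regularita}, your sketch is broadly aligned with the paper's strategy (freezing coefficients, partition of unity with small oscillation, Neumann series, then the Weis theorem), but that is not the statement you were asked to prove.
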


Here $UMD$ stands for spaces with uniform martingale differences. Their properties might be found in \cite{burkholder} or in \cite{francia}. For our purposes it is enough to know that $L^p$ and $W^{k,p}$ are $UMD$ once $p\in (1,\infty).$

Since we assume $\lambda\neq 0$ in \eqref{eq:lin.operator}, we deduce
\begin{equation*}
\theta = \frac 1\lambda \left(\mathscr G - (\varrho_* + \theta_0) \dvr u\right).
\end{equation*}
Consequently, we may rewrite the resolvent problem as
\begin{equation}\label{eq:hvezdicka}
\lambda u - \mathcal B_\lambda (u) = F_\lambda\qquad \mbox{ on }\Omega
\end{equation}
where
\begin{equation*}
\begin{split}
&\mathcal B_\lambda(u) = \frac 1{\varrho_*+\theta_0} \mathcal A(\mathbb D u_0)(\mathbb D u) +  \pi'(\varrho_* + \theta_0) \frac 1\lambda \nabla\dvr u\\
&F_\lambda  = T_\lambda\left(\begin{matrix}\mathscr G\\ \mathscr F\end{matrix}\right) := \left( \mathscr F - \nabla \mathscr G \frac 1\lambda \right).
\end{split}
\end{equation*}

We claim the following.
\begin{Theorem}
	\label{thm.non-constant} Let $\mathcal U_\lambda:L^q(\Omega)\to W^{2,q}(\Omega)$ be a solution operator to \eqref{eq:hvezdicka}. Then there exists a positive constant $c$ such that\footnote{Here $\mathcal R_{\mathcal L(L^q(\Omega))}$ is an abbreviation of $\mathcal R_{\mathcal L(L^q(\Omega), L^q(\Omega))}$}
	\begin{equation*}
	\begin{split}
	\mathcal R_{\mathcal L(L^q(\Omega))} \left(\left\{\lambda \mathcal U_\lambda, \lambda \in \Sigma_{\beta,\nu}\right\}\right) &\leq c\\
	\mathcal R_{\mathcal L(L^q(\Omega))} \left(\left\{|\lambda|^{1/2} \partial_j\mathcal U_\lambda, \lambda \in \Sigma_{\beta,\nu}\right\}\right) &\leq c\\
	\mathcal R_{\mathcal L(L^q(\Omega))} \left(\left\{\partial_j\partial_k \mathcal U_\lambda, \lambda \in \Sigma_{\beta,\nu}\right\}\right) &\leq c
	\end{split}
	\end{equation*}
	for every $j, k \in \{1,\ldots,d\}$, every $q\in (1,\infty)$ and for some $\beta\in (0,\pi/2)$ and $\nu>0$. 
\end{Theorem}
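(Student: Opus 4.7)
The plan is to establish $\mathcal R$-sectoriality of $\mathcal B_\lambda$ in three stages, following the localisation strategy of Enomoto and Shibata \cite{EnSh} and culminating in the Weis multiplier theorem (Theorem \ref{thm.weis}). First I would treat the constant coefficient model problem obtained by freezing $\rho_*+\theta_0$ and $\mathbb D u_0$ at an arbitrary point $x_0 \in \Omega$, then handle the genuinely variable coefficients by a partition of unity perturbation argument. The passage from the model problem on $\mathbb R^d$ to the torus is routine once the $\mathcal R$-bounds are available.

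For the constant coefficient problem, taking the Fourier transform (on $\mathbb R^d$, or equivalently Fourier series on $\Omega$) the resolvent equation reduces to the algebraic system $M(\xi,\lambda)\hat u(\xi) = \hat F_\lambda(\xi)$, where
\begin{equation*}
M(\xi,\lambda) = \lambda I_d + \frac{1}{\rho_*+\theta_0(x_0)}\,A(\xi) + \frac{\pi'(\rho_*+\theta_0(x_0))}{\lambda}\,\xi\otimes\xi, \qquad A(\xi)_{jk}=\sum_{l,m=1}^d a^{lm}_{jk}(\mathbb D u_0(x_0))\,\xi_l\xi_m.
\end{equation*}
The ellipticity bound \eqref{StrongEll} gives a uniform positive lower bound on $A(\xi)$ restricted to symmetric modes, and for $\lambda\in\Sigma_{\beta,\nu}$ with $\nu$ large enough $M(\xi,\lambda)$ is invertible for every $\xi\neq 0$. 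I would then verify that the matrix-valued symbols $\lambda M^{-1}$, $|\lambda|^{1/2}\xi_j M^{-1}$ and $\xi_j\xi_k M^{-1}$ satisfy Mikhlin-type bounds in $\xi$ and in $\lambda$, whereupon Theorem \ref{thm.weis} (applied in its multi-dimensional version, or iteratively one variable at a time) yields $\mathcal R$-boundedness of the corresponding Fourier multiplier operators on $L^q(\Omega)$.

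To pass to the genuinely variable coefficients I would employ a finite partition of unity $\{\varphi_j\}_{j=1}^N$ subordinate to a cover of $\Omega$ by balls $B(x_j,\varepsilon)$. The hypotheses $\theta_0,\nabla u_0\in \mathrm{BUC}(\Omega)$ furnish a common modulus of continuity $\omega(\varepsilon)\to 0$ such that on each ball the coefficients of $\mathcal B_\lambda$ differ from their frozen values at $x_j$ by at most $\omega(\varepsilon)$. Writing the equation for $\varphi_j u$ in the form
\begin{equation*}
(\lambda I - \mathcal B_\lambda^{(j)})(\varphi_j u) = \varphi_j F_\lambda + R_j(u),
\end{equation*}
where $\mathcal B_\lambda^{(j)}$ is the constant coefficient operator frozen at $x_j$, the remainder $R_j$ consists of first-order commutator terms with $\varphi_j$ (controlled by $|\lambda|^{1/2}\|\nabla u\|_{L^q}+\|u\|_{L^q}$) plus a coefficient perturbation of size at most $\omega(\varepsilon)\|\nabla^2 u\|_{L^q}$. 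Applying the constant coefficient $\mathcal R$-bound to each $\varphi_j u$, summing over $j$, and choosing $\varepsilon$ small and $\nu$ large, a Neumann series absorbs $R_j$ and yields the full $\mathcal R$-boundedness of $\mathcal U_\lambda$.

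The main obstacle will be the symbol analysis in the first step. Because of the term $\lambda^{-1}\xi\otimes\xi$, the symbol $M(\xi,\lambda)$ couples a parabolic regime ($\lambda\sim|\xi|^2$) with a hyperbolic-like one driven by the pressure, and does not admit a clean spectral decomposition. A careful factorisation of its characteristic polynomial, parallel to the compressible Lam\'e computation in \cite{EnSh}, is required to obtain uniform bounds on $M^{-1}$ and on $(\xi\cdot\nabla_\xi)M^{-1}$ and $\lambda\partial_\lambda M^{-1}$. The non-Newtonian structure of $a^{lm}_{jk}$ in \eqref{eq:ACoef} only complicates the algebra; since \eqref{StrongEll} together with $\nabla u_0\in\mathrm{BUC}(\Omega)$ provides a uniform positive ellipticity constant, the symbol analysis then proceeds in close analogy with the Newtonian case of \cite{EnSh}.
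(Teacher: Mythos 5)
Your strategy --- frozen-coefficient model problem via Fourier analysis, then a partition-of-unity perturbation argument with a Neumann series --- is exactly the route the paper takes (Theorem~\ref{Thm:ConstCase} followed by the localisation argument). Two points where your blueprint differs from, or anticipates more difficulty than, the actual proof. First, the multiplier theorem to invoke at the model-problem stage is Theorem~\ref{thm.3.3.ensh} (the Mikhlin-type criterion from \cite{EnSh} for $\mathcal R$-boundedness of a $\lambda$-parameterised family of Fourier multipliers on $L^q(\mathbb R^d)$), not Theorem~\ref{thm.weis}; the Weis theorem is used only later, in the proof of Theorem~\ref{thm:regularita}, to convert $\mathcal R$-sectoriality into maximal $L^p$-regularity in the time variable. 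Second, the ``main obstacle'' you identify --- a careful factorisation of the characteristic polynomial of $M(\xi,\lambda)$ forced by the pressure term $\lambda^{-1}\xi\otimes\xi$ --- does not arise. The paper avoids it entirely by treating $E(\xi,\lambda)=\frac{\gamma_2}{\lambda\gamma_1}\xi\otimes\xi$ as a perturbation of $\lambda\mathbb I_d+E(\xi)$: since $E(\xi)$ is symmetric and elliptic, $(\lambda\mathbb I_d+E(\xi))^{-1}$ is bounded by $c(\nu)(|\lambda|+|\xi|^2)^{-1}$ via diagonalisation and Lemma~\ref{lem.3.1.ensh}, and then
\begin{equation*}
\bigl|(\lambda\mathbb I_d+E(\xi))^{-1}E(\xi,\lambda)\bigr|\leq\frac{\gamma_2}{|\lambda|}\cdot\frac{c(\nu)|\xi|^2}{|\lambda|+c|\xi|^2}\leq\frac12
\end{equation*}
once $\nu$ is large, so a Neumann series gives invertibility and the resolvent bound without any spectral decomposition of the full symbol. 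This also turns the higher derivative estimates \eqref{eq:multipl} into a clean induction (the lemma inside the proof of Theorem~\ref{Thm:ConstCase}) rather than a bespoke symbolic computation; you would be well advised to adopt this simplification rather than attempting the factorisation you outline.
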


Note that the $\mathcal R-$boundedness of $\mathscr R_\lambda$ is a consequence of this Theorem and Proposition \ref{pro.2.16.ensh}. Indeed, recall that $\mathscr R_\lambda$ has two components. In particular, $\mathscr R_{\lambda 1} \left(\begin{matrix}\mathscr G\\ \mathscr F\end{matrix}\right) = \lambda\theta$ and $\mathscr R_{\lambda 2} \left(\begin{matrix} \mathscr G\\ \mathscr F\end{matrix}\right) = \lambda u$. Both of them can be written as a composition and sum of certain operators, namely
\begin{equation*}
\begin{split}
\mathscr R_{\lambda 1} \left(\begin{matrix} \mathscr G\\ \mathscr F\end{matrix}\right) &= \left( -(\varrho_* + \theta_0) \dvr \mathcal U_\lambda\circ T_\lambda\left(\begin{matrix}\mathscr G\\\mathscr F\end{matrix}\right) + \mathscr G\right)\\
\mathscr R_{\lambda 2} \left(\begin{matrix}\mathscr G\\ \mathscr F\end{matrix}\right) & =  \left(\lambda \mathcal U_\lambda \circ T_\lambda \right) \left(\begin{matrix} \mathscr G\\ \mathscr F\end{matrix}\right).
\end{split}
\end{equation*}
Proposition \ref{pro.2.16.ensh} then yields the claim.

In order to show Theorem \ref{thm.non-constant} we first prove its version with constant coefficients.

\begin{Theorem}\label{Thm:ConstCase}
	Let $\mathscr B_\lambda(u):W^{2,q}(\mathbb R^d)\to L^{q}(\mathbb R^d)$ be an operator defined as
	\begin{equation*}
	\mathscr B_\lambda(u) = \frac 1{\gamma_1} \sum_{k,l,m = 1}^da_{jk}^{lm} (|D|) \partial_l\partial_m u_k + \frac{\gamma_2}{\lambda}\sum_{l=1}^d\partial_j\partial_l u_l
	\end{equation*}
	where $a_{jk}^{lm}$ is defined by $\eqref{eq:ACoef}$ and $D\in \mathbb R^{d\times d}_{{\rm sym}}$, $\gamma_1,\ \gamma_2\in \mathbb R^+$ are arbitrary. Then the solution operator $\mathscr U_\lambda: L^q(\mathbb R^d)\to W^{2,q}(\mathbb R^d)$, $\mathscr U_\lambda: f\mapsto u$ where $u$ solves
	\begin{equation}\label{eq:const.coef}
	\lambda u - \mathscr B_\lambda u = f
	\end{equation}
	in $\mathbb R^d$ fulfills
	\begin{equation*}
	\begin{split}
	\mathcal R_{L^q(\mathbb R^d)} \left(\left\{\lambda \mathscr U_\lambda, \lambda \in \Sigma_{\beta,\nu}\right\}\right) &\leq c\\
	\mathcal R_{L^q(\mathbb R^d)} \left(\left\{|\lambda|^{1/2} \partial_j\mathscr U_\lambda, \lambda \in \Sigma_{\beta,\nu}\right\}\right) &\leq c\\
	\mathcal R_{L^q(\mathbb R^d)} \left(\left\{\partial_j\partial_k \mathscr U_\lambda, \lambda \in \Sigma_{\beta,\nu}\right\}\right) &\leq c
	\end{split}
	\end{equation*}
	for every $j, k \in \{1,\ldots,d\}$, every $q\in (1,\infty)$ and for some $\beta\in (0,\pi/2)$ and $\nu>0$. 
\end{Theorem}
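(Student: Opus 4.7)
The plan is to reduce the resolvent equation to an algebraic system via the spatial Fourier transform, to analyse the resulting matrix-valued symbol on the sector $\Sigma_{\beta,\nu}$, and to pass from pointwise Mikhlin-type bounds on the symbol to $\mathcal{R}$-boundedness of the associated Fourier multiplier operators.

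Applying the Fourier transform in $x$ to $\lambda u-\mathscr{B}_\lambda u=f$ turns the equation into the algebraic system $M(\lambda,\xi)\widehat u(\xi)=\widehat f(\xi)$, where
\begin{equation*}
M(\lambda,\xi)_{jk}=\lambda\,\delta_{jk}+\frac{1}{\gamma_1}\sum_{l,m=1}^d a_{jk}^{lm}(|D|)\xi_l\xi_m+\frac{\gamma_2}{\lambda\gamma_1}\xi_j\xi_k.
\end{equation*}
The theorem then reduces to proving invertibility of $M(\lambda,\xi)$ on $\Sigma_{\beta,\nu}\times(\eR^d\setminus\{0\})$ together with uniform Mikhlin-type bounds on the entries of $\lambda M^{-1}$, $|\lambda|^{1/2}\xi_j M^{-1}$ and $\xi_j\xi_k M^{-1}$ and all their $\xi$-derivatives.

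To verify invertibility I would exploit the orthogonal decomposition $\eR^d=\mathrm{span}(\xi)\oplus\xi^\perp$. On $\xi^\perp$ the gradient term $\xi_j\xi_k$ acts trivially and $M$ reduces to $\lambda I+\gamma_1^{-1}\mathcal A_0(\xi)$ with $\mathcal A_0(\xi)_{jk}=\sum_{l,m}a_{jk}^{lm}\xi_l\xi_m$; testing the ellipticity \eqref{BilFormDet}--\eqref{StrongEll} with the rank-one matrix $v\otimes\xi$ yields $\mathcal A_0(\xi)v\cdot v\geq C_{el}|\xi|^2|v|^2$ on $v\in\xi^\perp$, so this block is sectorial and its inverse is controlled by $c(|\lambda|+|\xi|^2)^{-1}$. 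On $\mathrm{span}(\xi)$ the system collapses to a scalar symbol of the form $\lambda+c_1|\xi|^2+c_2|\xi|^2/\lambda$ with $c_1,c_2>0$, whose modulus is bounded below by $c(|\lambda|+|\xi|^2)$ on $\Sigma_{\beta,\nu}$ provided $\beta$ is close enough to $\pi/2$ and $\nu>0$ is chosen large enough (clearing the denominator and examining $|\lambda^2+c_1\lambda|\xi|^2+c_2|\xi|^2|$ as a quadratic in $\lambda$). Combining both pieces gives $|M^{-1}(\lambda,\xi)|\leq c(|\lambda|+|\xi|^2)^{-1}$, and differentiating through $\partial_{\xi_i}M^{-1}=-M^{-1}(\partial_{\xi_i}M)M^{-1}$ produces inductively the Mikhlin-type estimates $|\xi|^{|\alpha|}|\partial_\xi^\alpha(\lambda M^{-1})|\leq c$ together with analogous bounds for $|\lambda|^{1/2}\xi_j M^{-1}$ and $\xi_j\xi_k M^{-1}$, uniformly in $\lambda\in\Sigma_{\beta,\nu}$.

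With these symbol bounds I would invoke the operator-valued Mikhlin theorem on UMD spaces — the $d$-dimensional extension of Theorem \ref{thm.weis} due to Girardi and Weis — to conclude that the three symbol families give rise to $\mathcal R$-bounded families of Fourier multiplier operators on $L^q(\eR^d)$ for every $q\in(1,\infty)$; the $\mathcal R$-bound of a family of scalar-valued multiplications reduces to the uniform supremum, which is exactly what the preceding estimates supply. The main technical obstacle is the singular factor $1/\lambda$ in $M$: the compressible term is unbounded as $\lambda\to 0$, and each $\xi$-derivative has the potential to create further inverse powers of $\lambda$. The role of the shift $\nu>0$ in $\Sigma_{\beta,\nu}$ is precisely to keep $|\lambda|$ bounded away from zero, and the careful book-keeping of all $\xi$-derivatives on the longitudinal block $\mathrm{span}(\xi)$ — ensuring that the compensations between $|\xi|^2$ and $1/\lambda$ close uniformly on the sector — is the most delicate step.
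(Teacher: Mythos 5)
Your high-level strategy (Fourier transform, Mikhlin bounds on the symbol, conclude $\mathcal R$-boundedness via an operator-valued multiplier theorem such as Theorem~\ref{thm.3.3.ensh}) matches the paper's, but the central invertibility step contains a genuine gap. You propose to split $\eR^d=\mathrm{span}(\xi)\oplus\xi^\perp$ and claim the symbol decomposes accordingly, with the compressible rank-one term $\xi\otimes\xi$ dying on $\xi^\perp$ and the elasticity block reducing to a scalar along $\mathrm{span}(\xi)$. This works for the isotropic (Newtonian) compressible symbol, but it fails here. From~\eqref{eq:ACoef} the elasticity symbol is
\begin{equation*}
\mathcal A_0(\xi)=\mu(|D|^2)\bigl(|\xi|^2\,\mathbb I_d+\xi\otimes\xi\bigr)+4\mu'(|D|^2)\,(D\xi)\otimes(D\xi)+\Bigl(\lambda+\lambda'(\cdot)(\cdot)-\tfrac2d\mu\Bigr)\xi\otimes\xi,
\end{equation*}
and the rank-one term $(D\xi)\otimes(D\xi)$ does \emph{not} leave $\xi^\perp$ or $\mathrm{span}(\xi)$ invariant unless $D\xi$ happens to be parallel to $\xi$. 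Since $D$ is a fixed, generically non-scalar symmetric matrix coming from $\mathbb D u_0$, the symbol couples the longitudinal and transverse subspaces, and the two ``blocks'' you analyse do not actually close. Your ellipticity argument only yields positive definiteness of $\mathcal A_0(\xi)$, which is necessary but does not give you the block structure you are using.

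The paper avoids this by not assuming any geometric relation between the eigenvectors of $\mathcal A_0(\xi)$ and $\xi$: it orthogonally diagonalizes the full symmetric elliptic matrix $\lambda\mathbb I_d+E(\xi)$ (which works for any symmetric positive definite $E$), obtains $|(\lambda\mathbb I_d+E(\xi))^{-1}|\le c(\nu)/(|\lambda|+|\xi|^2)$ from Lemma~\ref{lem.3.1.ensh}, and then treats the compressible piece $E(\xi,\lambda)=\frac{\gamma_2}{\lambda\gamma_1}\xi\otimes\xi$ as a Neumann-series perturbation that becomes small once $|\lambda|\ge\nu$ is large. The derivative estimates are then handled by a Leibniz-type induction on the resolvent. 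If you replace your block decomposition by that diagonalize-then-perturb argument, the rest of your outline (Mikhlin bounds, transfer to $\mathcal R$-boundedness) goes through.
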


\begin{proof}
	We apply Fourier transform to rewrite \eqref{eq:const.coef} as
	\begin{equation*}
	\lambda \hat u_m  + \frac 1{\gamma_1} \sum_{j,k,l=1}^da_{mj}^{kl} (|D|) \xi_k\xi_l \hat u_j  + \frac{\gamma_2}{\lambda} \xi_m\sum_{l=1}^d\xi_l \hat u_l = \hat f_m
	\end{equation*}
	where
	\begin{equation*}
	\hat u(\xi) := \mathcal F(u) = \int_{\mathbb R^d} e^{-i\eta\xi} u(\eta) \ {\rm d}\eta.
	\end{equation*}
	Set
	\begin{equation*}
	\begin{split}
	E(\xi)_{mj} &= \sum_{k,l=1}^d\frac 1{\gamma_1} a_{mj}^{kl} (|D|) \xi_k\xi_l\\
	E(\xi, \lambda)_{ml} &= \frac{\gamma_2}{\lambda} \xi_m\xi_l.
	\end{split}
	\end{equation*}
	Naturally,
	\begin{equation*}
	u = \mathcal F^{-1} \left((\lambda \mathbb I_d + E(\xi) + E(\lambda,\xi))^{-1}\hat f\right)
	\end{equation*}
	assuming $(\lambda \mathbb I_d + E(\xi) + E(\lambda,\xi))^{-1}$ exists -- this is discussed below. Further,
	\begin{equation}\label{eq:symboly}
	\begin{split}
	\lambda u &= \mathcal F^{-1}\left(\lambda (\lambda \mathbb I_d + E(\xi) + E(\lambda,\xi))^{-1}\hat f\right)\\
	|\lambda|^{\frac 12} \partial_j u & = \mathcal F^{-1} \left(|\lambda|^{\frac 12} i \xi_j(\lambda \mathbb I_d + E(\xi) + E(\lambda,\xi))^{-1}\hat f\right)\\
	\partial_j\partial_k u & = \mathcal F^{-1} \left( - \xi_j\xi_k(\lambda \mathbb I_d + E(\xi) + E(\lambda,\xi))^{-1}\hat f\right).
	\end{split}
	\end{equation}
	According to Theorem \ref{thm.3.3.ensh} it suffices to show that the multipliers appearing in \eqref{eq:symboly} satisfies
	\begin{equation}\label{eq:multipl}
	|\partial_\xi^\alpha m(\lambda,\xi)|\leq c_\alpha |\xi|^{-|\alpha|}
	\end{equation}
	for all multi-indeces $\alpha \in\mathbb N_0^d$ and for all $\lambda \in \Sigma_{\beta,\nu}$ where $\beta$ and $\nu$ are appropriately chosen. \\
	First, consider a matrix $\lambda \mathbb I_d + E(\xi)$. Since $E$ is elliptic and symmetric (see \eqref{StrongEll} and \eqref{eq:a.symmetry}) there is an orthogonal $\xi-$dependent matrix $O$ such that
	\begin{equation*}
	E(\xi) = O \left(\begin{matrix}\mu_1 & 0 & 0\\ 0& \mu_2 & 0\\ 0 & 0& \mu_3 \end{matrix}\right) O^T.
	\end{equation*}
	We restrict ourselves here to the case of $3\times 3$--matrices for clarity of the presentation.
	We consider without loss of generality $\mu_3\geq\mu_2\geq \mu_1 \geq c |\xi|^2$ (here $c$ in fact depend on $D$, however, we assume in applications that $D$ is always bounded by some constant dependent on $u_0$). Consequently,
	\begin{equation*}
	\lambda \mathbb I_d + E(\xi) = O \left(\begin{matrix} \lambda + \mu_1 & 0 & 0\\ 0 & \lambda + \mu_2 & 0 \\ 0 & 0 & \lambda + \mu_3\end{matrix}\right) O^T
	\end{equation*}
	and there exists $(\lambda \mathbb I_d + E(\xi))^{-1}$ which may be written as
	\begin{equation*}
	(\lambda \mathbb I_d + E(\xi))^{-1} = O \left(\begin{matrix} \frac 1{\lambda + \mu_1} & 0 & 0\\ 0 & \frac1{\lambda + \mu_2} & 0 \\ 0 & 0 & \frac{1}{\lambda + \mu_3}\end{matrix}\right) O^T.
	\end{equation*}
	We use Lemma \ref{lem.3.1.ensh} and the orthogonality of $O$ to claim
	\begin{equation*}
	|(\lambda \mathbb I_d + E(\xi))^{-1}|\leq \frac{c(\nu)}{|\lambda| + |\xi|^2}
	\end{equation*}
	in an operator norm. Further
	\begin{equation*}
	(\lambda \mathbb I_d + E(\xi) + E(\xi,\lambda))^{-1} = (\lambda \mathbb I_d + E(\xi))^{-1}(\mathbb I_d + (\lambda \mathbb I_d + E(\xi))^{-1} E(\xi,\lambda))^{-1}
	\end{equation*}
	and, from definition,
	\begin{equation*}
	|(\lambda \mathbb I_d + E(\xi))^{-1} E(\xi,\lambda)|\leq \frac{\gamma_2}{|\lambda|} \frac{c(\nu) |\xi|^2}{|\lambda| + c|\xi|^2}\leq \frac12
	\end{equation*}
	assuming $|\lambda|$ is sufficiently high. This can be managed by a proper choice of $\nu$ and $\beta$. For this choice we get $\lambda \mathbb I_d + E(\xi) + E(\xi,\lambda)$ invertible and
	\begin{equation*}
	\left|(\lambda \mathbb I_d + E(\xi) + E(\xi,\lambda))^{-1}\right|\leq \frac{c(\nu)}{|\lambda| + |\xi|^2}.
	\end{equation*}
	
	To proceed further, we prove the following lemma:
	\begin{Lemma}
		Let $|\lambda|>\lambda_0>0$. There exist coefficients $a_{j,n}$ depending on $\lambda_0$  such that
		\begin{equation}\label{eq:odhad.na.derivaci}
		\left|\partial^\alpha_\xi (\lambda \mathbb I_d + E(\xi) + E(\xi,\lambda))^{-1}\right| \leq \sum_{j=0}^{\left\lfloor \frac n2 \right\rfloor} a_{j,n} |\xi|^{n-2j} \left(\frac{c(\nu)}{|\lambda| + |\xi|^2}\right)^{1+n-j}
		\end{equation}
		for all multi-indeces $\alpha$ with $|\alpha| = n$
	\end{Lemma}
	\begin{proof}
		During this proof, we assume that all matrix multiplications appearing in the proof are commutative. This allows to simplify the notation and it does not affect the validity of the main claim. Using this assumption, we are going to prove 
		\begin{equation}\label{eq:indukce}
		\partial^\alpha_\xi(\lambda \mathbb I_d  + E(\xi) + E(\xi,\lambda))^{-1} = \sum_{j=0}^{\left\lfloor \frac n2\right\rfloor} a_{j,n} \left(E'(\xi) + E'(\xi,\lambda)\right)^{n-2j} \left(\lambda \mathbb I + E(\xi) + E(\xi,\lambda)\right)^{-1-n+j}.
		\end{equation}
		Here we also use another simplification as $E'(\xi)$ denotes arbitrary first partial derivative and the bracket $\left(E'(\xi) + E'(\xi,\lambda)\right)^{n-2j}$ should be read as the product of $n-2j$ first derivatives which are not necessarily with respect to the same coordinate. We also assume that $a_{j,n}$ is independent of $\lambda$ as far as $|\lambda|$ is far away from zero. 
		
		Clearly, \eqref{eq:odhad.na.derivaci} follows directly from \eqref{eq:indukce}. We prove \eqref{eq:indukce} by induction. Let $n=0$, then
		\begin{equation*}
		\partial^0 (\lambda \mathbb I_d + E(\xi) + E(\xi,\lambda))^{-1} = (\lambda \mathbb I_d + E(\xi) + E(\xi,\lambda))^{-1}.
		\end{equation*}
		Now let \eqref{eq:indukce} holds for certain $n\in \mathbb N$ and let $\alpha$ be a multiindex of length $|\alpha| = n+1$. Then
		\begin{equation*}
		\partial^\alpha_\xi(\lambda \mathbb I_d + E(\xi) + E(\xi,\lambda))^{-1} = \left(\partial^{\alpha'}_\xi(\lambda \mathbb I_d + E(\xi) + E(\xi,\lambda))^{-1}\right)'
		\end{equation*}
		for certain multi-index $\alpha'$ with $|\alpha'| = n$. Since $E(\xi)$ and $E(\xi,\lambda)$ are second order polynomials in $\xi$, $E''(\xi)$ and $E''(\xi,\lambda)$ are constants which might be included into $a_{j,n}$. We apply one partial derivation on the right hand side of \eqref{eq:indukce}. For $j\neq \frac n2$ we get (throughout this proof we use the letter $a$ to denote a constant which may vary from line to line but it possesses the same dependencies as $a_{j,n}$)
		\begin{multline*}
		\left(\left(E'(\xi) + E'(\xi,\lambda)\right)^{n-2j} \left(\lambda \mathbb I_d + E(\xi) + E(\xi,\lambda)\right)^{-1-n+j}\right)' =\\
		a\left(E'(\xi) + E'(\xi,\lambda)\right)^{n-2j-1}\left(\lambda \mathbb I_d + E(\xi) + E(\xi,\lambda)\right)^{-1-n+j}\\
		+ a \left(E'(\xi) + E'(\xi,\lambda)\right)^{n-2j+1} \left(\lambda \mathbb I_d + E(\xi) + E(\xi,\lambda)\right)^{-2-n+j}.
		\end{multline*}
		Both terms can be found on the right hand side of relation \eqref{eq:indukce} assuming $|\alpha|= n+1$. 
		
		Now let $j=\frac n2$. Then
		\begin{multline*}
		\left((E'(\xi) + E'(\xi,\lambda))^0 \left(\lambda \mathbb I_d + E(\xi) + E(\xi,\lambda)\right)^{-1-\frac n2}\right)'\\
		= \left(E'(\xi) + E'(\xi,\lambda)\right) \left(\lambda \mathbb I_d + E(\xi) + E(\xi,\lambda)\right)^{-2-\frac n2}\\
		= \left(E'(\xi) + E'(\xi,\lambda)\right) \left(\lambda \mathbb I_d + E(\xi) + E(\xi,\lambda)\right)^{-1-(n+1) - \left\lfloor \frac{n+1}{2}\right\rfloor}
		\end{multline*}
		and this term is also on the right hand side of \eqref{eq:indukce} assuming $|\alpha|= n+1$.
	\end{proof}
	We use this lemma to infer
	\begin{equation*}
	|\partial^\alpha_\xi (\lambda \mathbb I_d   + E(\xi) + E(\xi,\lambda))^{-1}|\leq c^\alpha\left(|\lambda|^{1/2} + |\xi|\right)^{-2-|\alpha|}.
	\end{equation*}
	An easy calculation yield the validity of \eqref{eq:multipl}.
\end{proof}

\begin{proof}[Proof of Theorem \ref{thm.non-constant}] Recall that, according to the standard decomposition of unity, for all $\varepsilon>0$ there is a covering $\{B_j\}_{j=1}^\infty$ of the torus $\Omega$ such that
	\begin{itemize}
		\item $\bigcup_{j=1}^\infty B_j \supset \Omega$.
		\item There are functions $\zeta_j$ and $\tilde \zeta_j$ such that $\supp \zeta_j \subset B_j$, $\tilde \zeta_j = 1$ on $\supp \zeta_j$, $\supp \tilde \zeta_j\subset 2B_j$, $\|\zeta_j\|_{W^{2,\infty}} + \|\tilde\zeta_j\|_{W^{2,\infty}} \leq c$ with $c$ independent of $j$, and $\sum_{j=1}^\infty \zeta_j = 1$ on $\Omega$.
		\item Since $\nabla u_0, \theta_0\in BUC$, we may assume the following property: for every $j\in \mathbb N$ 
		\begin{equation*}
		|\mathbb Du_0(x) - \mathbb Du_0(x_j)| + |\theta_0(x) - \theta_0(x_j)| <\varepsilon
		\end{equation*}
		for all $x\in B_j$. Here $x_j$ is the center of a ball $B_j$. 
	\end{itemize}
	We emphasize that $\varepsilon>0$ will be chosen later. 
	
	For every $j$ we solve an equation
	\begin{multline}\label{eq:lokalni}
	\lambda u_j - \frac1{\varrho_* + \theta_0(x_j)}  \mathcal A(\mathbb D u_0(x_j))(\mathbb D u) -  \pi'(\varrho_* + \theta_0(x_j))\frac 1\lambda  \nabla \dvr u_j \\
	= \tilde \zeta_j\left(F + \left(\frac 1{\varrho_* + \theta_0(x)} \mathcal A(\mathbb D u(x))(\mathbb D u) - \frac 1{\varrho_* + \theta_0(x_j)}  \mathcal A(\mathbb D u_0(x_j))(\mathbb D u) \right)\right. \\
	+\left.\left(\pi'(\varrho_* + \theta_0(x)) \frac{1}\lambda \nabla\dvr u_j\right) - \pi'(\varrho_* + \theta_0(x_j))\frac 1\lambda  \nabla \dvr u_j\right).
	\end{multline}
	Since $\mathbb D u$ and $\theta$ are bounded and uniformly continuous function, \eqref{eq:lokalni} can be written as
	\begin{equation}\label{eq:lokal.operator}
	\lambda u_j - \mathscr B_\lambda u_j = \tilde \zeta_j F + \tilde\zeta_j \mathscr P_{\lambda j} u_j
	\end{equation}
	where $\mathscr P_{\lambda j}:W^{2,q}(\mathbb R^d)\to L^q(\mathbb R^d)$ is a linear operator bounded independently of $j$. Note also that $\mathscr B_\lambda +  \mathscr P_{\lambda j} = \mathcal B_\lambda$ on $B_j$. 
	
	We claim that $\lambda(\lambda \mathbb I_d - \mathscr B_\lambda -  \mathscr P_{\lambda j})^{-1}$, $|\lambda|^{\frac 12} \partial_k(\lambda \mathbb I_d - \mathscr B_\lambda -  \mathscr P_{\lambda j})^{-1}$ and $\partial_k\partial_l(\lambda \mathbb I_d - \mathscr B_\lambda -  \mathscr P_{\lambda j})^{-1}$ are $\mathcal R-$bounded on $\Sigma_{\beta,\nu}$ assuming $\varepsilon$ is small enough. To justify this claim it is enough to repeat the proof of \cite[Proposition 4.2]{DeHiPr}. First, recall that $\mathscr P_\lambda$ is $\mathcal R-$bounded on $\Sigma_{\beta,\nu}$ by $c\varepsilon$ according to Proposition \ref{pro.2.13.ensh}. The same proposition also yields $(\lambda \mathbb I_d - \mathscr B_\lambda)^{-1}$ is $\mathcal R-$bounded as $\lambda(\lambda\mathbb I_d-\mathscr B_\lambda)^{-1}$ is $\mathcal R-$bounded on $\Sigma_{\beta,\nu}$ by Theorem~\ref{Thm:ConstCase}. We have
	\begin{multline*}
	\lambda(\lambda \mathbb I_d -\mathscr B_\lambda -  \mathscr P_{\lambda j})^{-1} = \lambda (\lambda \mathbb I_d - \mathscr B_\lambda)^{-1} (\mathbb I_d -  \mathscr P_{\lambda j}(\lambda \mathbb I_d - \mathscr B_{\lambda})^{-1})^{-1}\\
	= \lambda (\lambda \mathbb I_d - \mathscr B_\lambda)^{-1} \sum_{n=0}^\infty (\mathscr P_{\lambda j} (\lambda\mathbb I_d - \mathscr B_\lambda)^{-1})^n.
	\end{multline*}
	We use Proposition \ref{pro.2.16.ensh} to deduce
	\begin{equation*}
	\mathcal R\left\{\lambda (\lambda \mathbb I_d - \mathscr B_\lambda)^{-1}  ( \mathscr P_{\lambda j} (\lambda\mathbb I_d - \mathscr B_\lambda)^{-1})^n\right\} \leq \mathcal R\left\{ \lambda (\lambda \mathbb I_d - \mathscr B_\lambda)^{-1}\right\}\left( c\varepsilon \mathcal R\left\{(\lambda \mathbb I_d - \mathscr B_\lambda)^{-1}\right\}\right)^{n}
	\end{equation*}
	Now it is enough to take $\varepsilon$ such that $\gamma:= c\varepsilon  \mathcal R\left\{(\lambda \mathbb I_d - \mathscr B_\lambda)^{-1}\right\}$ is less than $1$. This allows to claim that 
	\begin{equation*}
	\mathcal R\left\{\lambda(\lambda \mathbb I_d -\mathscr B_\lambda - \mathscr P_{\lambda j})^{-1}\right\}\leq \mathcal R\left\{\lambda(\lambda \mathbb I_d -\mathscr B_\lambda)^{-1}\right\}\frac 1{1-\gamma}.
	\end{equation*}
	The same true can be deduced also for the remaining two operators.

	Let $\mathcal T_j(\lambda)$ be a solution operator to \eqref{eq:lokal.operator}. We define
	\begin{equation*}
	\mathcal W(\lambda) (f) = \sum_{j=1}^\infty \zeta_j \mathcal T_j(\lambda)(\tilde \zeta_j f).
	\end{equation*}
	Then $u = \mathcal W(\lambda) (f)$ fulfills
	\begin{equation*}
	\lambda u - \mathcal B_\lambda u = f - \mathscr K_\lambda(f)
	\end{equation*}
	where $\mathscr K_\lambda:L^q(\Omega)\to L^q(\Omega)$ is defined as $\mathscr K_\lambda(f) = \sum_{j=1}^\infty\mathcal B_\lambda (\zeta_j u_j) - \zeta_j \mathcal B_\lambda (u_j)$ and $u_j = \mathcal T_j(\lambda)(\tilde\zeta_j f)$.
	The operator $\mathscr K_\lambda$ may be written as 
	\begin{multline}\label{eq:definice.k}
	\mathscr K_\lambda = \frac 1{\varrho_* + \theta_0}\sum_{j=1}^\infty\sum_{k,l,n=1}^d a_{mn}^{kl}(\mathbb D u_0) \left(\partial_k\partial_l \zeta_j u_{j,n} + \partial_l \zeta_j \partial_k u_{j,n} + \partial_k \zeta_j \partial_l u_{j,n}\right)\\
	+ \sum_{j=1}^\infty\pi'(\varrho_* + \theta_0) \frac 1\lambda \left(\nabla \zeta_j \dvr u_j + \nabla^2 \zeta_j u_j + \nabla \zeta_j \nabla u_j\right) 
	\end{multline}
	and $\mathscr K_\lambda$ is in particular $\mathcal R-$bounded on $\Sigma_{\beta,\nu'}$. Indeed, we show this particular boundedness of one term of the right hand side of \eqref{eq:definice.k} since the others might be treated in the same way.
	\begin{multline*}
	\int_0^1 \left\|\sum_{l=1}^n r_l(v) \left(\sum_{j=1}^\infty (\nabla \zeta_j)\dvr  \mathcal T_j(\lambda_l)(\tilde\zeta_j f)\frac{\pi'(\varrho_* + \theta_0)}{\lambda}\right) \right\|^q_{L^q(\Omega)}\ {\rm d}v\\
	\leq c \sum_{j=1}^\infty \int_0^1 \left\|\sum_{l=1}^n r_l(v)(\nabla \zeta_j) \dvr  \mathcal T_j(\lambda_l)(\tilde\zeta_j f)\right\|^q_{L^q(B_j\cap \Omega)}\ {\rm d}v\\
	\leq c \sum_{j=1}^\infty \int_0^1 \left\|\sum_{l=1}^n r_l(v)|\lambda_l|^{-1/2} |\lambda_l|^{1/2} \dvr  \mathcal T_j(\lambda_l)(\tilde\zeta_j f)\right\|^q_{L^q(B_j\cap \Omega)}\ {\rm d}v\\
	\leq c \nu'^{-1/2}\sum_{j=1}^\infty \int_0^1 \left\|\sum_{l=1}^n r_l(v) |\lambda_l|^{1/2} \dvr  \mathcal T_j(\lambda_l)(\tilde\zeta_j f)\right\|^q_{L^q(B_j\cap \Omega)}\ {\rm d}v\\
	\leq c \nu'^{-1/2}\sum_{j=1}^\infty \int_0^1 \left\|\sum_{l=1}^n r_l(v)  f\right\|^q_{L^q(B_j\cap \Omega)}\ {\rm d}v.
	\end{multline*}
	Consequently, the $\mathcal R-$bound of $\mathscr K_\lambda$ is $c\nu'^{-1/2}$ and by a proper choice of $\nu'$ it is less or equal than $\frac 12$. We deduce that 
	\begin{equation*}
	\mathcal R_{\mathcal L(L^q)}\{(\mathbb I_d-\mathscr K_\lambda)^{-1}, \lambda \in \Sigma_{\beta,\nu'}\}\leq 2.
	\end{equation*}
	See also Proposition \ref{pro.2.16.ensh}.
	We thus have $u = \mathcal W\circ (\mathbb I_d-\mathscr K_\lambda)^{-1} (f)$ and this solution operator fulfills all the demanded properties. 
	
	Note also that the solution to \eqref{eq:hvezdicka} is unique. This comes from the ellipticity of operator $-\mathcal B_\lambda$.
\end{proof}

\begin{proof}[Proof of Theorem \ref{thm:regularita}]
	Due to Theorem \ref{thm.non-constant} we have
	\begin{equation}\label{LResRBound}
	\lambda(\lambda \mathbb I_d - \mathscr A)^{-1}\ \mathcal R-\text{bounded for all }\lambda \in \Sigma_{\beta,\nu}.
	\end{equation}
	Especially, $\lambda(\lambda \mathbb I_d -\mathscr A)^{-1}$ is bounded and by the Hille-Yosida theorem (see \cite[Theorem 3.1, Chapter 1]{viorel}) there exists a  semigroup of class $C^0$ generated by $\mathscr A$. We denote this semigroup $\mathscr T$, i.e.
	\begin{equation*}
	\pat \mathscr T -\mathscr A\mathscr T = 0. 
	\end{equation*}
	We set $ U(t) = \int_0^t \mathscr T(t-s) F(s)\ {\rm d}s$ where $F = (\mathscr G, \mathscr F)$ and we immediately get
	\begin{equation}\label{eq:trik.s.nu}
	\|U\|_{L^p(0,T;(W^{1,q}(\Omega)\times L^q(\Omega)))} \leq c \|F\|_{L^p(0,T;W^{1,q}(\Omega)\times L^q(\Omega))}
	\end{equation}
	where $c$ depends also on the time interval $(0,T)$. Recall that $U$ has two components and we denote them by $\theta$ and $u$. Moreover $U$ solves
	\begin{equation*}
	\pat U - \mathscr A U = F
	\end{equation*}
	which is another form of \eqref{eq:A.operator}.
	
	Clearly, $U$ solves also
	\begin{equation*}
	\pat U + 2\nu U  -  \mathscr A U = F + 2\nu U.
	\end{equation*}
	We set $G = F + 2\nu U$ and we extend the definition of $G$ and $U$ in such a way that $U(t) = 0$, $G(t) = 0$ for all $t<0$. 
	
	We may write
	\begin{equation*}
	U(t) = \int_{-\infty}^\infty e^{(-2\nu\mathbb I_d + \mathscr A)(t-s)}\chi_{[0,\infty)}(t-s) G(s) \ {\rm d}s = \left(e^{(-2\nu\mathbb I_d +\mathscr A)(s)}  \chi_{[0,\infty)}(s)\right)* \left(G(s)\right) (t).
	\end{equation*}
	
	By rules for Fourier transform we get
	\begin{equation*}
	\widehat{\pat U} = i\xi \left((-i\xi-2\nu)\mathbb I_d + \mathscr A\right)^{-1} \widehat G 
	\end{equation*}
	and $M(\xi)=i\xi\left((-i\xi-2\nu)\mathbb I_d+\mathscr A\right)^{-1}$ is the corresponding multiplier as mentioned in Theorem \ref{thm.weis}. Indeed, using \eqref{LResRBound}, Proposition \ref{pro.2.13.ensh} and Proposition \ref{pro.2.16.ensh} we infer that $M(\xi)$ and $\xi M'(\xi)$ are $\mathcal{R}$--bounded and Theorem \ref{thm.weis} implies 
	\begin{equation*}
	\|\pat U\|_{L^p(0,T;W^{1,q}(\Omega)\times L^q(\Omega))} \leq c\|G\|_{L^p(0,T;W^{1,q}(\Omega)\times L^q(\Omega))} \leq c \|F\|_{L^p (0,T;(W^{1,q}(\Omega)\times L^q(\Omega)))}
	\end{equation*}
	having \eqref{eq:trik.s.nu} in mind. Consequently,
	\begin{equation}\label{eq:prvni.vysledek}
	\|\theta\|_{W^{1,p}(0,T;W^{1,q}(\Omega))} + \| u\|_{W^{1,p}(0,T;L^q(\Omega))}  \leq c\left(\|\mathscr G\|_{L^p(0,T;W^{1,q}(\Omega))} + \|\mathscr F\|_{L^p(0,T;L^q(\Omega))}\right).
	\end{equation}
	
	Note also that $\theta,u$ solves 
	\begin{equation*}
	2\nu\left(\begin{matrix}\theta\\ u\end{matrix}\right) - \mathscr A \left(\begin{matrix} \theta\\ u \end{matrix}\right) = \left(\begin{matrix} \mathscr G \\ \mathscr F\end{matrix}\right) - \pat \left(\begin{matrix} \theta \\ u \end{matrix}\right) + 2\nu\left(\begin{matrix}\theta\\ u \end{matrix} \right).
	\end{equation*}
	
	Theorem \ref{thm.non-constant} yields that $\partial_j\partial_k \lambda^{-1}\mathcal R_{\lambda 2}$ is bounded on $\Sigma_{\beta,\nu}$ for every $j,k= 1,\ldots,d$ and thus we deduce (with help of \eqref{eq:prvni.vysledek})
	\begin{equation*}
	\|u\|_{L^p(0,T;W^{2,q}(\Omega))}\leq c\left(\|\mathscr G\|_{L^p(0,T;W^{1,q}(\Omega))} + \|\mathscr F\|_{L^p(0,T;L^{q}(\Omega))}\right).
	\end{equation*}
	
	This completes the proof.
	
\end{proof}

\section{Appendix}

Here we present several theorems from other sources for readers convenience. 

\begin{Theorem}[Theorem 3.3 in \cite{EnSh}]\label{thm.3.3.ensh}
	  Let $1<q<\infty$ and let $\Lambda\subset \mathbb C$. Let $m(\lambda,\xi)$ be a function defined on $\Lambda \times (\mathbb R^d\setminus \{0\})$ such that for any multi-index $\alpha \in \mathbb N^d_0$ there exists a constant $C_\alpha$ depending on $\alpha$ and $\Lambda$ such that 
	$$ |\partial^\alpha_\xi m(\lambda,\xi)|\leq C_\alpha |\xi|^{-\alpha}
	$$
	for any $(\lambda,\xi)\in \Lambda\times (\mathbb R^d\setminus \{0\})$. Let $K_\lambda$ be an operator defined by $K_\lambda f = \mathcal F^{-1}_\xi [m(\lambda,\xi)\hat f(\xi)]$. Then, the set $\{K_\lambda|\lambda\in \Lambda\}$ is $\mathcal R-$bounded on $\mathcal L(L^q(\mathbb R^d))$ and 
	$$
	\mathcal R_{\mathcal L(L^q(\mathbb R^d))} (\{K_\lambda | \lambda\in\Lambda\}) \leq C_{q,d} \max_{|\alpha|\leq d+2} C_\alpha.
	$$
\end{Theorem}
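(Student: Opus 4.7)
The plan is to prove this $\mathcal R$-boundedness result by reducing it to a Hilbert-space-valued Mikhlin multiplier estimate on $L^q(\mathbb R^d)$, exploiting the equivalence (valid in any $L^q$, $1<q<\infty$) between $\mathcal R$-bounds and square-function inequalities via Khintchine's inequality.

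First, I would fix an arbitrary finite family $\lambda_1,\dots,\lambda_n\in\Lambda$ and $f_1,\dots,f_n\in L^q(\mathbb R^d)$. Choosing $p=q$ in the definition of $\mathcal R$-boundedness (which is permissible up to a universal constant by Kahane's inequality) and using Fubini together with Khintchine's inequality pointwise in $x$, one obtains
\begin{equation*}
\int_0^1\Bigl\|\sum_{j=1}^n r_j(v)K_{\lambda_j}f_j\Bigr\|_{L^q(\mathbb R^d)}^q\,\diff v
\;\simeq\;
\Bigl\|\Bigl(\sum_{j=1}^n |K_{\lambda_j}f_j|^2\Bigr)^{1/2}\Bigr\|_{L^q(\mathbb R^d)}^q,
\end{equation*}
and an identical equivalence holds on the right-hand side with $f_j$ in place of $K_{\lambda_j}f_j$. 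Hence the $\mathcal R$-bound reduces to the square-function inequality
\begin{equation*}
\Bigl\|\bigl(\textstyle\sum_j |K_{\lambda_j}f_j|^2\bigr)^{1/2}\Bigr\|_{L^q}
\;\leq\; C_{q,d}\,\max_{|\alpha|\leq d}C_\alpha\;
\Bigl\|\bigl(\textstyle\sum_j |f_j|^2\bigr)^{1/2}\Bigr\|_{L^q}.
\end{equation*}

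Next, I would package the family into a single operator-valued multiplier. Define
\begin{equation*}
M(\xi)\colon \ell^2_n\to\ell^2_n,\qquad M(\xi)=\operatorname{diag}\bigl(m(\lambda_1,\xi),\dots,m(\lambda_n,\xi)\bigr),
\end{equation*}
and let $T_M g = \mathcal F^{-1}[M(\xi)\hat g(\xi)]$ for $\ell^2_n$-valued $g$. Since $M(\xi)$ is diagonal, its operator norm on $\ell^2_n$ equals the maximum modulus of its entries, and the same is true for all derivatives. The uniform Mikhlin hypothesis therefore gives
\begin{equation*}
\bigl\|\partial^\alpha_\xi M(\xi)\bigr\|_{\mathcal L(\ell^2_n)}
=\max_{1\leq j\leq n}\bigl|\partial^\alpha_\xi m(\lambda_j,\xi)\bigr|
\leq C_\alpha |\xi|^{-|\alpha|},
\end{equation*}
uniformly in $n$. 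Then the classical Hilbert-space-valued Mikhlin multiplier theorem on $L^q(\mathbb R^d;\ell^2_n)$ (a standard consequence of Calderón--Zygmund theory for operator-valued kernels taking values in a Hilbert space) yields
\begin{equation*}
\|T_M g\|_{L^q(\mathbb R^d;\ell^2_n)} \leq C_{q,d}\max_{|\alpha|\leq d}C_\alpha\,\|g\|_{L^q(\mathbb R^d;\ell^2_n)}.
\end{equation*}
Applying this with $g=(f_1,\dots,f_n)$, so that $T_M g=(K_{\lambda_1}f_1,\dots,K_{\lambda_n}f_n)$, delivers exactly the square-function inequality above, and reversing the Khintchine step completes the proof with the asserted $\mathcal R$-bound.

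The main obstacle, and the place where most care is needed, is the Hilbert-valued Mikhlin step: one must know that the standard Mikhlin multiplier theorem extends from scalar-valued to $\mathcal L(H)$-valued symbols with constants controlled by the same finite collection of $C_\alpha$, uniformly in the dimension $n$ of the target. Once this is granted --- it rests on Calderón--Zygmund kernel estimates and the UMD property of $L^q$ for $1<q<\infty$, both classical --- everything else is packaging via Khintchine and Kahane. Passing from finite families $\{\lambda_1,\dots,\lambda_n\}$ to the full set $\Lambda$ is automatic from the definition of $\mathcal R$-boundedness, since the constant we obtain depends only on $\max_{|\alpha|\leq d}C_\alpha$ and not on $n$.
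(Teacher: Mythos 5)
The paper does not prove this theorem; it is quoted verbatim in the Appendix as Theorem~3.3 of Enomoto--Shibata \cite{EnSh}, so there is no in-paper proof to compare against. Your reconstruction is correct and is in fact the standard route to such results: reduce the $\mathcal R$-bound to a square-function inequality via the Kahane--Khintchine equivalence, repackage the finite family as a diagonal $\mathcal L(\ell^2_n)$-valued symbol, observe that the operator norm of a diagonal matrix (and all its $\xi$-derivatives) is the maximum of the entries so the Mikhlin constants are inherited uniformly in $n$, and then invoke the Hilbert-space-valued Mikhlin--H\"ormander theorem on $L^q(\mathbb R^d;\ell^2_n)$ with constants depending only on $d$, $q$ and $\max_{|\alpha|\le d}C_\alpha$. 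One small imprecision worth flagging: for the Hilbert-valued Mikhlin step you do not need the UMD machinery you mention at the end --- since the target is a Hilbert space, the classical vector-valued Calder\'on--Zygmund theory (e.g.\ Benedek--Calder\'on--Panzone) already gives the $L^q(\mathbb R^d;H)$ bound for $1<q<\infty$ with dimension-free constants; UMD only becomes essential when the target is a general Banach space, as in Weis's operator-valued multiplier theorem. This does not affect the correctness of the argument.
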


\begin{Lemma}
	[Lemma 3.1 in \cite{EnSh}] \label{lem.3.1.ensh} Let $0<\beta<\pi/2$ and $\nu_0>0$. For any $\lambda\in \Sigma_{\beta,\nu}$ we have
	$$
	|\lambda + |\xi|^2| \geq \sin(\beta/2) (|\lambda| + |\xi|^2).
	$$
\end{Lemma}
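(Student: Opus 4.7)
The plan is to reduce to the case $\nu=0$ and then prove the resulting inequality by a direct trigonometric computation. As a first step, I would verify the set inclusion $\Sigma_{\beta,\nu}\subseteq \Sigma_{\beta,0}$ whenever $\nu\geq 0$: writing $\lambda-\nu = re^{i\phi}$ with $|\phi|\leq \pi-\beta$, adding the positive real number $\nu$ shifts the point rightward, which does not increase the modulus of the argument (when $\phi\in(\pi/2,\pi-\beta]$ one checks $\arg(\nu+re^{i\phi})<\phi$ since the shift is horizontal toward the positive real axis). Hence it suffices to prove the desired inequality under the assumption $|\arg\lambda|\leq \pi-\beta$.

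Writing $\lambda = |\lambda|e^{i\theta}$ with $|\theta|\leq \pi-\beta$ and setting $s = |\xi|^2 \geq 0$, I would square both sides of the target estimate to obtain the equivalent assertion
\begin{equation*}
|\lambda|^2 + 2|\lambda|s\cos\theta + s^2 \geq \sin^2(\beta/2)(|\lambda|+s)^2.
\end{equation*}
Expanding the right-hand side and using the identities $1 - \sin^2(\beta/2) = \cos^2(\beta/2)$ and $\cos^2(\beta/2)-\sin^2(\beta/2) = \cos\beta$, this rearranges to
\begin{equation*}
\cos^2(\beta/2)(|\lambda|^2 + s^2) + 2|\lambda|s\bigl(\cos\theta - \sin^2(\beta/2)\bigr) \geq 0.
\end{equation*}
Then I would apply the AM--GM bound $|\lambda|^2+s^2\geq 2|\lambda|s$ to the first term, which reduces the claim to $\cos^2(\beta/2) + \cos\theta - \sin^2(\beta/2) = \cos\beta + \cos\theta \geq 0$, and this is immediate from $|\theta|\leq \pi-\beta$, since $\cos\theta\geq\cos(\pi-\beta)=-\cos\beta$.

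There is no significant obstacle. The only mildly delicate point is the reduction to $\nu=0$; a clean alternative would be to bypass it and work directly with $\lambda=\nu+re^{i\phi}$, tracking the positive contribution of $\nu$ throughout, but the inclusion $\Sigma_{\beta,\nu}\subseteq\Sigma_{\beta,0}$ keeps the algebra minimal. Geometrically the statement is transparent: the sector $\Sigma_{\beta,0}$ stays at angular distance at least $\beta$ from the negative real ray, and translating $\lambda$ to the right by a positive real quantity $|\xi|^2$ only moves it further from that ray, so $\lambda+|\xi|^2$ cannot be anomalously small compared with $|\lambda|+|\xi|^2$.
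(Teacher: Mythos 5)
Your proof is correct. Note that the paper itself does not prove this lemma — it is quoted verbatim in the Appendix from \cite{EnSh} — so there is no in-paper argument to compare against; what you have produced is a complete, self-contained proof of the cited statement.

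The reduction $\Sigma_{\beta,\nu}\subseteq\Sigma_{\beta,0}$ for $\nu>0$ is sound: since $|\arg(\lambda-\nu)|\le\pi-\beta<\pi$, the point $\lambda-\nu$ avoids the negative real axis, and adding the positive real $\nu$ does not increase $|\arg|$ in any of the sign cases for the imaginary part. The subsequent computation is exactly right: with $\lambda=|\lambda|e^{i\theta}$, $|\theta|\le\pi-\beta$, and $s=|\xi|^2$,
\begin{equation*}
|\lambda+s|^2-\sin^2(\beta/2)(|\lambda|+s)^2
=\cos^2(\beta/2)(|\lambda|^2+s^2)+2|\lambda|s\bigl(\cos\theta-\sin^2(\beta/2)\bigr)
\ge 2|\lambda|s(\cos\beta+\cos\theta)\ge 0,
\end{equation*}
where AM--GM and $\cos\theta\ge\cos(\pi-\beta)=-\cos\beta$ are used. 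This is the standard elementary argument for sector estimates of this type (essentially the law of cosines in the triangle with sides $|\lambda|$ and $s$ and included angle $\pi-|\theta|\ge\beta$), and is either identical to or a minor rearrangement of the argument in \cite{EnSh}. One minor stylistic remark: the AM--GM step can be bypassed by writing $\cos^2(\beta/2)(|\lambda|^2+s^2)+2|\lambda|s(\cos\theta-\sin^2(\beta/2))\ge \cos^2(\beta/2)(|\lambda|-s)^2 + 2|\lambda|s(\cos\beta+\cos\theta)$, which shows both contributions are individually nonnegative — but this is cosmetic; your version is fine.
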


\begin{Proposition}
	[Proposition 2.13 in \cite{EnSh}] \label{pro.2.13.ensh} Let $D\subset \mathbb R^d$ be a domain and let $\Lambda$ be a domain in $\mathbb C$. Let $m(\lambda)$ be a bounded function on $\Lambda$ and let $M_m(\lambda):L^q(D)\mapsto L^q(D)$ be defined as $M_m(\lambda) f = m(\lambda)f$. Then
	$$
	\mathcal R_{\mathcal L(L^q(D))} (\{M_m(\lambda)|\lambda\in \Lambda\}) \leq C_{d,q,D} \|m\|_{L^\infty(\Lambda)}.
	$$
\end{Proposition}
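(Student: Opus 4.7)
The key observation is that $M_m(\lambda)$ is multiplication by the scalar $m(\lambda)$, so the claim reduces to Kahane's contraction principle: for any Banach space $X$, independent symmetric real-valued random variables $(\varepsilon_j)_{j=1}^n$, real scalars $(a_j)$ with $\max_j |a_j| \leq K$, and vectors $(x_j)\subset X$,
\begin{equation*}
\int_0^1 \Big\|\sum_{j=1}^n \varepsilon_j(v)\, a_j\, x_j\Big\|_X^p\, dv \;\leq\; K^p \int_0^1 \Big\|\sum_{j=1}^n \varepsilon_j(v)\, x_j\Big\|_X^p\, dv,\qquad p\in[1,\infty).
\end{equation*}

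First, I fix $p\in[1,\infty)$ as in the definition of $\mathcal R$-boundedness and pick arbitrary $\lambda_1,\ldots,\lambda_n\in\Lambda$ and $f_1,\ldots,f_n\in L^q(D)$. Since $M_m(\lambda_j) f_j = m(\lambda_j) f_j$, the quantity to be estimated is
\begin{equation*}
\int_0^1 \Big\|\sum_{j=1}^n r_j(v)\, m(\lambda_j)\, f_j\Big\|_{L^q(D)}^p\, dv.
\end{equation*}
When $m$ is real-valued, the contraction principle applied in the Banach space $X = L^q(D)$ with $a_j = m(\lambda_j)$, $K = \|m\|_{L^\infty(\Lambda)}$, $x_j = f_j$ delivers the bound immediately. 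When $m$ is complex-valued, I decompose $m(\lambda_j) = \mathrm{Re}\, m(\lambda_j) + i\,\mathrm{Im}\, m(\lambda_j)$, apply the real-scalar contraction principle to each piece (each of modulus at most $\|m\|_{L^\infty(\Lambda)}$), and recombine by the triangle inequality in $L^p([0,1];L^q(D))$, absorbing an absolute factor of $2$ into the constant $C_{d,q,D}$.

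There is essentially no obstacle in the proof: the only substantive ingredient is Kahane's contraction principle, a classical fact (see e.g.\ \cite{KuWe,DeHiPr}) provable by conditioning on the $|a_j|$'s and exploiting the Rademacher symmetry, or equivalently by observing that the extreme points of the cube $[-1,1]^n$ are the $\pm 1$ vertices at which the inequality holds trivially. Note that the constant produced this way is in fact independent of the domain $D$ and of the dimension $d$, so the dependence indicated in the statement is only a matter of generality rather than necessity.
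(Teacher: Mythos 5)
The paper does not prove this Proposition---it simply imports it from \cite{EnSh} (Proposition~2.13 there), so there is no in-paper proof to compare against. Your argument via Kahane's contraction principle is the standard proof of this fact in the literature and is correct: for Rademacher signs $r_j$, the map $(a_1,\dots,a_n)\mapsto\bigl(\int_0^1\|\sum_j r_j(v)a_jf_j\|^p\,\mathrm dv\bigr)^{1/p}$ is convex on $[-K,K]^n$, hence maximized at a vertex, where symmetry of the $r_j$ gives equality with the unweighted sum; splitting complex $m$ into real and imaginary parts costs a factor $2$. Your closing observation is also right: the resulting $\mathcal R$-bound is $2\|m\|_{L^\infty(\Lambda)}$ and does not depend on $d$, $q$, or $D$, so the subscripts on $C_{d,q,D}$ in the cited statement are only nominal. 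One small imprecision worth flagging: in your parenthetical on how the contraction principle is proved, the extreme-point argument needs the random variables to be $\{-1,1\}$-valued (which they are, per the paper's definition of $\mathcal R$-boundedness), not merely symmetric real-valued; and the alternative route is by conditioning on the moduli $|\varepsilon_j|$, not on $|a_j|$. Neither slip affects the validity of the proof for the case at hand.
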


\begin{Proposition}
	[Proposition 2.16 in \cite{EnSh} or Proposition 3.4 in \cite{DeHiPr}] \label{pro.2.16.ensh}
	\hspace{0.5em} \begin{enumerate}\item Let $X$ and $Y$ be Banach spaces and let $\mathcal T$ and $\mathcal S$ be $\mathcal R-$bounded families on $\mathcal L(X,Y)$. Then $\mathcal T+\mathcal S = \{T+S| T\in \mathcal T, S\in \mathcal S\}$ is also $\mathcal R-$bounded on $\mathcal L(X,Y)$ and
		$$
		\mathcal R_{\mathcal L(X,Y)} (\mathcal T + \mathcal S)\leq \mathcal R_{\mathcal L(X,Y)} (\mathcal T) + R_{\mathcal L(X,Y)} (\mathcal S).
		$$ 
		\item Let $X,Y$ and $Z$ be Banach spaces and let $\mathcal T$ and $\mathcal S$ be $\mathcal R-$bounded families on $\mathcal L(X,Y)$ and $\mathcal L(Y,Z)$ respectively. Then $\mathcal S\mathcal T = \{ST| T\in \mathcal T, S\in \mathcal S\}$ is $\mathcal R-$bounded on $\mathcal L(X,Y)$ and
		$$
		\mathcal R_{\mathcal L(X,Z)}(\mathcal S\mathcal T) \leq \mathcal R_{\mathcal L(X,Y)} (\mathcal T) \mathcal R_{\mathcal L(Y,Z)}(\mathcal S).
		$$
	\end{enumerate}
	
\end{Proposition}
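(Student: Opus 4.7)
The plan is to derive both assertions directly from the definition of $\mathcal{R}$-boundedness, viewed through the lens of Rademacher-averaged norms in $L^p([0,1];X)$. Throughout I will work with the Bochner-$L^p$ formulation, writing $\|\sum_j r_j(\cdot) x_j\|_{L^p([0,1];X)}$ in place of the $p$-th power integral, since the triangle inequality is then immediate; the bounds in the statement follow upon interpreting $\mathcal{R}_{\mathcal{L}(X,Y)}(\mathcal{T})$ as the optimal constant in this $L^p$ inequality.

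For part (i) I would fix $n\in\eN$, arbitrary operators $R_j = T_j + S_j$ with $T_j\in\mathcal{T}$, $S_j\in\mathcal{S}$, and vectors $f_j\in X$. By linearity of the Rademacher sum,
\begin{equation*}
\sum_{j=1}^n r_j(v) R_j f_j = \sum_{j=1}^n r_j(v) T_j f_j + \sum_{j=1}^n r_j(v) S_j f_j.
\end{equation*}
Applying Minkowski's inequality in $L^p([0,1];Y)$ splits the left-hand norm into two pieces, to each of which I would apply the $\mathcal{R}$-bound of $\mathcal{T}$ and $\mathcal{S}$ respectively, obtaining
\begin{equation*}
\left\|\textstyle\sum_j r_j R_j f_j\right\|_{L^p([0,1];Y)} \le \bigl(\mathcal{R}(\mathcal{T}) + \mathcal{R}(\mathcal{S})\bigr)\left\|\textstyle\sum_j r_j f_j\right\|_{L^p([0,1];X)}.
\end{equation*}
Since the $R_j$ and $f_j$ were arbitrary, this yields the claimed bound on $\mathcal{R}_{\mathcal{L}(X,Y)}(\mathcal{T}+\mathcal{S})$.

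For part (ii) the key observation is that the defining inequality for $\mathcal{R}$-boundedness quantifies over \emph{all} vectors in the source space, not merely over a distinguished set. So fixing $U_j = S_j T_j$ with $S_j\in\mathcal{S}$, $T_j\in\mathcal{T}$, and $f_j\in X$, I would introduce the auxiliary vectors $g_j := T_j f_j \in Y$ and apply the $\mathcal{R}$-bound of $\mathcal{S}$ to these:
\begin{equation*}
\left\|\textstyle\sum_j r_j S_j T_j f_j\right\|_{L^p([0,1];Z)} = \left\|\textstyle\sum_j r_j S_j g_j\right\|_{L^p([0,1];Z)} \le \mathcal{R}(\mathcal{S}) \left\|\textstyle\sum_j r_j g_j\right\|_{L^p([0,1];Y)}.
\end{equation*}
A second application, this time of the $\mathcal{R}$-bound of $\mathcal{T}$ to the original vectors $f_j$, bounds $\|\sum_j r_j g_j\|_{L^p([0,1];Y)} = \|\sum_j r_j T_j f_j\|_{L^p([0,1];Y)}$ by $\mathcal{R}(\mathcal{T})\|\sum_j r_j f_j\|_{L^p([0,1];X)}$. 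Concatenation gives $\mathcal{R}_{\mathcal{L}(X,Z)}(\mathcal{S}\mathcal{T}) \le \mathcal{R}(\mathcal{T})\mathcal{R}(\mathcal{S})$.

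There is no real obstacle here: both statements are essentially formal consequences of the definition, with the only subtlety being the bookkeeping for part (ii), where one must recognise that the \emph{same} Rademacher sequence $(r_j)$ governs both nested applications, so the intermediate step is licit precisely because the $g_j$'s are admissible test vectors for $\mathcal{S}$ regardless of how they were produced. The Minkowski step in part (i) and the nesting argument in part (ii) are standard; the rest is notation.
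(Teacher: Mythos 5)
Your proof is correct and is the standard argument; the paper itself does not prove this proposition but quotes it from the cited references \cite{EnSh,DeHiPr}, where essentially your Minkowski-plus-nesting argument appears. Your explicit remark that $\mathcal R_{\mathcal L(X,Y)}(\mathcal T)$ is to be read as the optimal constant in the $L^p([0,1];Y)$-\emph{norm} inequality (rather than in the $p$-th-power inequality as the paper's definition of $\mathcal R$-boundedness literally states) is the right normalization, since the stated additivity and submultiplicativity hold exactly in that form.
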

\begin{Remark}
Our plan is to extend the result to the case of a bounded domain including some more general boundary conditions as inflow/outflow or nonhomogeneous Dirichlet boundary conditions.

\end{Remark}
{\bf Acknowledgement}: The research of \v{S}.N. and V.M. leading to these results has received funding from the Czech Sciences Foundation (GA\v CR), GA19-04243S and in the framework of RVO: 67985840. The research of M.K. was supported by RVO: 67985840. Moreover,  M.K. was supported by the Czech Sciences Foundation (GA\v CR), GA19-04243S during his research on the final version.
\bigskip

{\bf Conflict of interest:}

\v{S}\'arka Ne\v{c}asov\'a as 
 the corresponding author declares on behalf of all authors, that there is no conflict of interest.

{\bf Declarations:}
'Not applicable' for the whole manuscript.

{\bf Data availability statement:}
There are no associated data corresponding to the manuscript.
\bibliographystyle{plain}
\bibliography{literatura}

\begin{thebibliography}{10}

\bibitem{AbFeNo}
Anna Abbatiello, Eduard Feireisl, and Anton\'in Novotn\'y.
\newblock Generalized solutions to models of compressible viscous fluids.
\newblock {\em Discrete Contin. Dyn. Syst.}, 41(1):1--28, 2021.

\bibitem{Amann2}
Herbert Amann.
\newblock {\em Linear and quasilinear parabolic problems. {V}ol. {II}}, volume
  106 of {\em Monographs in Mathematics}.
\newblock Birkh\"{a}user/Springer, Cham, 2019.
\newblock Function spaces.

\bibitem{viorel}
Viorel Barbu.
\newblock {\em Nonlinear semigroups and differential equations in {B}anach
  spaces}.
\newblock Editura Academiei Republicii Socialiste Rom\^{a}nia, Bucharest;
  Noordhoff International Publishing, Leiden, 1976.
\newblock Translated from the Romanian.

\bibitem{BeBl}
Hamid Bellout and Frederick Bloom.
\newblock {\em Incompressible bipolar and non-{N}ewtonian viscous fluid flow}.
\newblock Advances in Mathematical Fluid Mechanics. Birkh\"{a}user/Springer,
  Cham, 2014.

\bibitem{Bel2}
Hamid Bellout, Frederick Bloom, and Jind\v{r}ich Ne\v{c}as.
\newblock Solutions for incompressible non-{N}ewtonian fluids.
\newblock {\em C. R. Acad. Sci. Paris S\'{e}r. I Math.}, 317(8):795--800, 1993.

\bibitem{Bel1}
Hamid Bellout, Frederick Bloom, and Jind\v{r}ich Ne\v{c}as.
\newblock Young measure-valued solutions for non-{N}ewtonian incompressible
  fluids.
\newblock {\em Comm. Partial Differential Equations}, 19(11-12):1763--1803,
  1994.

\bibitem{BP}
Dieter Bothe and Jan Pr\"{u}ss.
\newblock {$L_P$}-theory for a class of non-{N}ewtonian fluids.
\newblock {\em SIAM J. Math. Anal.}, 39(2):379--421, 2007.

\bibitem{burkholder}
Donald~L. Burkholder.
\newblock Martingales and {F}ourier analysis in {B}anach spaces.
\newblock In {\em Probability and analysis ({V}arenna, 1985)}, volume 1206 of
  {\em Lecture Notes in Math.}, pages 61--108. Springer, Berlin, 1986.

\bibitem{ChaDa}
Fr\'{e}d\'{e}ric Charve and Rapha\"{e}l Danchin.
\newblock A global existence result for the compressible {N}avier-{S}tokes
  equations in the critical {$L^p$} framework.
\newblock {\em Arch. Ration. Mech. Anal.}, 198(1):233--271, 2010.

\bibitem{CP}
Philippe Cl\'{e}ment and Jan Pr\"{u}ss.
\newblock {\em An operator-valued transference principle and maximal regularity
  on vector-valued {$L_p$}-spaces}, volume 215 of {\em Lecture Notes in Pure
  and Appl. Math.}
\newblock Dekker, New York, 2001.

\bibitem{DG}
G.~Da~Prato and P.~Grisvard.
\newblock Sommes d'op\'{e}rateurs lin\'{e}aires et \'{e}quations
  diff\'{e}rentielles op\'{e}rationnelles.
\newblock {\em J. Math. Pures Appl. (9)}, 54(3):305--387, 1975.

\bibitem{DeHiPr}
Robert Denk, Matthias Hieber, and Jan Pr\"uss.
\newblock {R}-boundedness, {F}ourier multipliers and problems of elliptic and
  parabolic type.
\newblock {\em Mem. Amer. Math. Soc.}, 166(788):viii+114, 2003.

\bibitem{EnSh}
Yuko Enomoto and Yoshihiro Shibata.
\newblock On the {R}-sectoriality and the initial boundary value problem for
  the viscous compressible fluid flow.
\newblock {\em Funkcial. Ekvac.}, 56(3):441--505, 2013.

\bibitem{FeNoPe}
Eduard Feireisl, Anton\'{\i}n Novotn\'{y}, and Hana Petzeltov\'{a}.
\newblock On the existence of globally defined weak solutions to the
  {N}avier-{S}tokes equations.
\newblock {\em J. Math. Fluid Mech.}, 3(4):358--392, 2001.

\bibitem{Graffi}
Dario Graffi.
\newblock Il teorema di unicit\`a nella dinamica dei fluidi compressibili.
\newblock {\em J. Rational Mech. Anal.}, 2:99--106, 1953.

\bibitem{Itaya1}
Nobutoshi Itaya.
\newblock On the cauchy problem for the system of fundamental equations
  describing the movement of compressible viscous fluid.
\newblock {\em K{\"o}dai Math. Sem. Rep.}, 23:60–120, 1971.

\bibitem{Volpert}
Nobutoshi Itaya.
\newblock On the cauchy problem for the system of fundamental equations
  describing the movement of compressible viscous fluid.
\newblock {\em Kodai Math. Sem. Rep.}, 23:60--120, 1971.

\bibitem{Itaya2}
Nobutoshi Itaya.
\newblock On the initial value problem of the motion of compressible viscous
  fluid, especially on the problem of uniqueness.
\newblock {\em J. Math. Kyoto Univ.}, 16(2):413--427, 1976.

\bibitem{KuWe}
Peer~C. Kunstmann and Lutz Weis.
\newblock Maximal {$L_p$}-regularity for parabolic equations, {F}ourier
  multiplier theorems and {$H^\infty$}-functional calculus.
\newblock In {\em Functional analytic methods for evolution equations}, volume
  1855 of {\em Lecture Notes in Math.}, pages 65--311. Springer, Berlin, 2004.

\bibitem{LUS}
O.~A. Lady\v{z}enskaja, V.~A. Solonnikov, and N.~N. Uralceva.
\newblock {\em Linear and quasilinear equations of parabolic type}.
\newblock Translated from the Russian by S. Smith. Translations of Mathematical
  Monographs, Vol. 23. American Mathematical Society, Providence, R.I., 1968.

\bibitem{Lad}
O.~A. Ladyzhenskaya.
\newblock {\em The mathematical theory of viscous incompressible flow}.
\newblock Revised English edition. Translated from the Russian by Richard A.
  Silverman. Gordon and Breach Science Publishers, New York-London, 1963.

\bibitem{Lions}
Pierre-Louis Lions.
\newblock {\em Mathematical topics in fluid mechanics. {V}ol. 2}, volume~10 of
  {\em Oxford Lecture Series in Mathematics and its Applications}.
\newblock The Clarendon Press, Oxford University Press, New York, 1998.
\newblock Compressible models, Oxford Science Publications.

\bibitem{MNRR}
J.~M\'{a}lek, J.~Ne\v{c}as, M.~Rokyta, and M.~R\r{u}\v{z}i\v{c}ka.
\newblock {\em Weak and measure-valued solutions to evolutionary {PDE}s},
  volume~13 of {\em Applied Mathematics and Mathematical Computation}.
\newblock Chapman \& Hall, London, 1996.

\bibitem{MNR}
Josef M\'{a}lek, Jind\v{r}ich Ne\v{c}as, and Michael R\r{u}\v{z}i\v{c}ka.
\newblock On the non-{N}ewtonian incompressible fluids.
\newblock {\em Math. Models Methods Appl. Sci.}, 3(1):35--63, 1993.

\bibitem{Mamontov1}
A.~E. Mamontov.
\newblock On the global solvability of the multidimensional {N}avier-{S}tokes
  equations of a nonlinearly viscous fluid. {I}.
\newblock {\em Sibirsk. Mat. Zh.}, 40(2):408--420, iii, 1999.

\bibitem{Mamontov2}
A.~E. Mamontov.
\newblock On the global solvability of the multidimensional {N}avier-{S}tokes
  equations of a nonlinearly viscous fluid. {II}.
\newblock {\em Sibirsk. Mat. Zh.}, 40(3):635--649, iii, 1999.

\bibitem{MatNi2}
Akitaka Matsumura and Takaaki Nishida.
\newblock The initial value problem for the equations of motion of viscous and
  heat-conductive gases.
\newblock {\em J. Math. Kyoto Univ.}, 20(1):67--104, 1980.

\bibitem{MatNi1}
Akitaka Matsumura and Takaaki Nishida.
\newblock Initial-boundary value problems for the equations of motion of
  compressible viscous and heat-conductive fluids.
\newblock {\em Comm. Math. Phys.}, 89(4):445--464, 1983.

\bibitem{MNN}
\v{S}. Matu\v{s}\r{u}-Ne\v{c}asov\'{a} and A.~Novotn\'{y}.
\newblock Measure-valued solution for non-{N}ewtonian compressible isothermal
  monopolar fluid.
\newblock {\em Acta Appl. Math.}, 37(1-2):109--128, 1994.
\newblock Mathematical problems for Navier-Stokes equations (Centro, 1993).

\bibitem{Nash}
John Nash.
\newblock Le probl\`eme de {C}auchy pour les \'{e}quations diff\'{e}rentielles
  d'un fluide g\'{e}n\'{e}ral.
\newblock {\em Bull. Soc. Math. France}, 90:487--497, 1962.

\bibitem{P}
Jan Pr\"{u}ss.
\newblock Maximal regularity for evolution equations in {$L_p$}-spaces.
\newblock {\em Conf. Semin. Mat. Univ. Bari}, (285):1--39 (2003), 2002.

\bibitem{francia}
Jos\'{e}~L. Rubio~de Francia.
\newblock Martingale and integral transforms of {B}anach space valued
  functions.
\newblock In {\em Probability and {B}anach spaces ({Z}aragoza, 1985)}, volume
  1221 of {\em Lecture Notes in Math.}, pages 195--222. Springer, Berlin, 1986.

\bibitem{Serrin}
James Serrin.
\newblock On the uniqueness of compressible fluid motions.
\newblock {\em Arch. Rational Mech. Anal.}, 3:271--288 (1959), 1959.

\bibitem{SHS}
Yoshihiro Shibata and Senjo Shimizu.
\newblock On the maximal {$L_p$}-{$L_q$} regularity of the {S}tokes problem
  with first order boundary condition; model problems.
\newblock {\em J. Math. Soc. Japan}, 64(2):561--626, 2012.

\bibitem{Sol}
V.~A. Solonnikov.
\newblock The solvability of the initial-boundary value problem for the
  equations of motion of a viscous compressible fluid.
\newblock {\em Zap. Nau\v{c}n. Sem. Leningrad. Otdel. Mat. Inst. Steklov.
  (LOMI)}, 56:128--142, 197, 1976.
\newblock Investigations on linear operators and theory of functions, VI.

\bibitem{Ta}
Hiroki Tanabe.
\newblock {\em Functional analytic methods for partial differential equations},
  volume 204 of {\em Monographs and Textbooks in Pure and Applied Mathematics}.
\newblock Marcel Dekker, Inc., New York, 1997.

\bibitem{Valli}
Alberto Valli.
\newblock Periodic and stationary solutions for compressible {N}avier-{S}tokes
  equations via a stability method.
\newblock {\em Ann. Scuola Norm. Sup. Pisa Cl. Sci. (4)}, 10(4):607--647, 1983.

\bibitem{VaZa}
Alberto Valli and Wojciech~M. Zaj{\k{a}}czkowski.
\newblock Navier-{S}tokes equations for compressible fluids: global existence
  and qualitative properties of the solutions in the general case.
\newblock {\em Comm. Math. Phys.}, 103(2):259--296, 1986.

\bibitem{W}
Lutz Weis.
\newblock Operator-valued {F}ourier multiplier theorems and maximal
  {$L_p$}-regularity.
\newblock {\em Math. Ann.}, 319(4):735--758, 2001.

\end{thebibliography}

\end{document}